\newtheorem{thm}{Theorem}[section]
\newtheorem{cor}[thm]{Corollary}
\newtheorem{lem}[thm]{Lemma}
\newtheorem{prop}[thm]{Proposition}
\theoremstyle{definition}
\newtheorem{ex}[thm]{Example}
\newtheorem{nota}[thm]{Notation}
\newtheorem{defn}[thm]{Definition}
\theoremstyle{remark}
\newtheorem{rem}[thm]{Remark}
\begin{document}

\address{Einstein Institute of Mathematics, Edmond J. Safra Campus, The Hebrew University of Jerusalem,
Givat Ram. Jerusalem, 9190401, Israel}
\email{xiang.he@mail.huji.ac.il}

\title[Short version of title]{Brill-Noether Generality of Binary Curves}
\author{Xiang He}
\maketitle

\begin{abstract}
We show that the space $G^r_{\underline d}(X)$ of linear series of certain multi-degree $\underline d=(d_1,d_2)$ (including the balanced ones) and rank $r$ on a general genus-$g$ binary curve $X$ has dimension $\rho_{g,r,d}=g-(r+1)(g-d+r)$ if nonempty, where $d=d_1+d_2$. This generalizes Theorem 24 of Caporaso's paper \cite{caporaso2010brill} from the case $r\leq 2$ to arbitrary rank, and shows that the space of Osserman-limit linear series on a general binary curve has the expected dimension, which was known for $r\leq 2$ (\cite[\S 7]{osserman2014dimension}). In addition, we show that the space $G^r_{\underline d}(X)$ is still of expected dimension after imposing certain ramification conditions with respect to a sequence of increasing effective divisors supported on two general points $P_i\in Z_i$, where $i=1,2$ and $Z_1,Z_2$ are the two components of $X$.     Our result also has potential application to the lifting problem of divisors on graphs to divisors on algebraic curves.
\end{abstract}

\section{Introduction}
Let $k$ be an algebraically closed field. For a curve $C$ over $k$, a $\mathfrak g^r_d$ on $C$ denotes a linear series on $C$ of degree $d$ and rank $r$.  
Brill-Noether theory states that if $C$ is general, then the space $G^r_d(C)$ of $\mathfrak g^r_d$s on $C$ has dimension
$$\rho_{g,r,d}=g-(r+1)(g-d+r).$$
This provides a bridge between the realms of abstract curves and curves in projective spaces: for a general curve, the theory tells us whether it is equipped with a nondegenerate map of a certain degree to a certain projective space. The result was first proved in \cite{griffiths1980variety} by degenerating to an irreducible curve $C'$ with $g$ nodes (the normalization of which is a rational curve), as in the left part of Figure \ref{fig: introduction}, and showing that $G^r_d(C')$ has dimension at most $\rho_{g,r,d}$. After that, it has inspired a great deal of additional research, including re-proofs and new techniques such as limit linear series and tropical linear series  \cite{eisenbud1986limit,cools2012tropical,osserman2019limit,amini2015linear}.


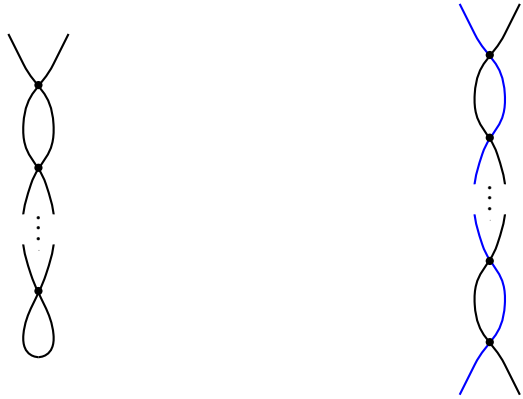
\begin{figure}[H]
\begin{tikzpicture}[scale=0.4]
    
    \draw[thick] plot [smooth,tension=1] coordinates{(1,5)(0.5,4)(-0.5,2)(0.25,0)(0.5,-1)};
    \draw[blue,thick] plot [smooth,tension=1] coordinates{(-1,5)(-0.5,4)(0.5,2)(-0.25,0)(-0.5,-1)};
    \draw[thick] plot [smooth,tension=1] coordinates{(1,-8)(0.5,-7)(-0.5,-5)(0.25,-3)(0.5,-2)};
    \draw[blue, thick] plot [smooth,tension=1] coordinates{(-1,-8)(-0.5,-7)(0.5,-5)(-0.25,-3)(-0.5,-2)};
    \draw (0,-2.2) node[circle, fill=black, scale=0, label=above:{\vdots}]{};
     \draw (0,-3.55) node[circle, fill=black, scale=0.3]{};
     \draw (0,-6.25) node[circle, fill=black, scale=0.3]{};
     
     \draw (0,3.3) node[circle, fill=black, scale=0.3]{};
     \draw (0,0.55) node[circle, fill=black, scale=0.3]{};

     \draw[shift={(-15,-1)}][thick] plot [smooth,tension=1] coordinates{(1,5)(0.5,4)(-0.5,2)(0.25,0)(0.5,-1)};
     \draw[shift={(-15,-1)}][thick] plot [smooth,tension=1]coordinates{(-1,5)(-0.5,4)(0.5,2)(-0.25,0)(-0.5,-1)};
     \draw[shift={(-15,-1)}][thick] plot [smooth,tension=1] coordinates{(0,-5.75)(-0.5,-5)(0.25,-3)(0.5,-2)};
      \draw[shift={(-15,-1)}][thick] plot [smooth,tension=1] coordinates{(0,-5.75)(0.5,-5)(-0.25,-3)(-0.5,-2)};
    \draw [shift={(-15,-1)}](0,-2.2) node[circle, fill=black, scale=0, label=above:{\vdots}]{};
    
     \draw[shift={(-15,-1)}] (0,3.3) node[circle, fill=black, scale=0.3]{};
     \draw[shift={(-15,-1)}] (0,0.55) node[circle, fill=black, scale=0.3]{};
     \draw[shift={(-15,-1)}] (0,-3.55) node[circle, fill=black, scale=0.3]{};
    
\end{tikzpicture}
\caption{A genus $g$ irreducible curve (resp. binary curve) with $g$ (resp. $g+1$) nodes.}\label{fig: introduction}
\end{figure}

While the space $G^r_d(C)$ is well-studied for smooth curves (cf. \cite[Ch.V]{arbarello2013geometry}), little is known for reducible ones. 
In this paper we consider a \textbf{binary curve} $X$ of genus $g$, which, as in the right part of Figure \ref{fig: introduction}, is obtained by glueing two copies $Z_1, Z_2$ of $\mathbb P^1_k$ along $g+1$ pairs of different points $(Q^1_j,Q^2_j)$, where $Q^i_j\in Z_i$ for $0\leq j\leq g$ and $1\leq i\leq 2$. For a multi-degree $\underline d=(d_1,d_2)$ on $(Z_1,Z_2)$ such that $d_1+d_2=d$, denote by $\mathfrak g^r_{\underline d}$ a linear series on $X$ whose underlying line bundle has multi-degree $\underline d$. It was proved in \cite{caporaso2010brill} that the space $G^r_{\underline d}(X)$ of $\mathfrak g^r_{\underline d}$s on a general binary curve $X$ has dimension at most $\rho_{g,r,d}$ if $\underline d$ is balanced, namely $|d_1-d_2|\leq g+1$, and $r\leq 2$. We extend the result of Caporaso to arbitrary rank $r$, and show that $G^r_{\underline d}(X)$ still has expected dimension after imposing a certain ramification conditions:

\begin{thm}\label{thm: introduction}
Let $X$ be a genus-$g$ binary curve obtained from glueing two copies $Z_1,Z_2$ of $\mathbb P^1_k$, and fix $r$ and $\underline d$ as above. Let $P_i\in Z_i$ be two general points. Suppose $d_i\geq -1$ for $1\leq i\leq 2$ and $X$ is general. Then for a sequence $D_\bullet$ of increasing effective divisors supported on $\{P_1,P_2\}$ and an admissible sequence $a_\bullet $ along $D_\bullet$ satisfying certain conditions, the space of $\mathfrak g^r_{\underline d}$s with multi-vanishing sequence at least $a_\bullet$ along $D_\bullet$ is either empty or of expected dimension. 
In particular, $G^r_{\underline d}(X)$ is either empty or of dimension $\rho_{g,r,d}$.
\end{thm}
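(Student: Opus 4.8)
\medskip

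\noindent\textit{A proof strategy.} The reverse inequality is automatic: by the standard determinantal presentation of Brill--Noether loci, valid because $\mathrm{Pic}^{\underline d}(X)$ is an algebraic group of dimension $g$, every nonempty component of $G^r_{\underline d}(X)$, and of the subvarieties cut out by the multi-vanishing conditions, has at least the expected dimension (by ``expected dimension'' I mean $\rho_{g,r,d}$ diminished by the weight of the imposed multi-vanishing sequence). So the content is the \emph{upper} bound, and the plan is to prove the ramified statement --- which is what the induction really needs --- by induction on $g$, taking it as known for all smaller genus and all $\underline d$, $D_\bullet$, $a_\bullet$. Working on a binary curve makes everything explicit: a line bundle of multidegree $\underline d$ is $\mathcal O(d_1)$ on $Z_1$ and $\mathcal O(d_2)$ on $Z_2$ glued by scalars $\mu_0,\dots,\mu_g\in k^\ast$ modulo simultaneous scaling, so $\mathrm{Pic}^{\underline d}(X)\cong(k^\ast)^g$; a global section is a pair $(f_1,f_2)$ of polynomials with $\deg f_i\le d_i$ subject to the $g+1$ equations $f_1(Q^1_j)=\mu_j f_2(Q^2_j)$; and $G^r_{\underline d}(X)$ together with its multi-vanishing loci is a quasi-affine variety defined by rank conditions on matrices of polynomial evaluations. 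In the base case $g=0$, where $X$ is two lines through a point, $\mathrm{Pic}^{\underline d}(X)$ is a point, $h^0(X,L)=d+1$ (still valid when some $d_i=-1$), and $G^r_{\underline d}(X)=\mathbb G(r+1,d+1)$, the multi-vanishing conditions at $P_1,P_2$ cut out a Schubert-type subvariety whose dimension is the expected one exactly under the admissibility and ``certain conditions'' hypotheses --- the classical count for ramified linear series on $\mathbb P^1$, run on each component and linked across the node.

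For the inductive step I would unglue the last node $\nu_g=(Q^1_g,Q^2_g)$, producing a binary curve $X'$ of genus $g-1$ carrying the same marked points $P_1,P_2$; writing $\pi\colon X'\to X$ for this partial normalization and twisting $0\to\mathcal O_X\to\pi_\ast\mathcal O_{X'}\to k_{\nu_g}\to 0$ by $L$ identifies $H^0(X,L)$ with the kernel of $\delta_{\mu_g}\colon H^0(X',L)\to k$, $\delta_{\mu_g}(f)=f_1(Q^1_g)-\mu_g f_2(Q^2_g)$. So a $\mathfrak g^r_{\underline d}$ with the prescribed multi-vanishing on $X$ is the same as one on $X'$, with the same multi-vanishing, plus the demand that the two evaluation functionals $\alpha=\mathrm{ev}_{Q^1_g}|_V$ and $\beta=\mathrm{ev}_{Q^2_g}|_V$ on $V$ be proportional. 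If both are nonzero and proportional, $\mu_g$ is determined by the ratio and the condition is the rank $\le 1$ locus of a $2\times(r+1)$ matrix, of codimension $r$; since the expected dimension drops by exactly $r$ from $X'$ to $X$ (indeed $\rho_{g,r,d}=\rho_{g-1,r,d}-r$), the inductive bound on $X'$ then gives the expected bound for $X$. If exactly one of $\alpha,\beta$ vanishes there is no compatible $\mu_g$. If both vanish, then $V\subseteq H^0(X',L(-Q^1_g-Q^2_g))$ and $\mu_g$ is free, so this stratum maps with one-dimensional fibers to $G^r_{(d_1-1,d_2-1)}(X')$ with an induced multi-vanishing; since $\rho_{g-1,r,d-2}=\rho_{g-1,r,d}-2(r+1)$, the inductive bound there gives dimension at most $\rho_{g-1,r,d}-2(r+1)+1$, which is $\le\rho_{g,r,d}$ for all $r\ge -1$. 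In this last stratum I would moreover specialize $Q^i_g\to P_i$, which absorbs the extra vanishing at $Q^i_g$ into the conditions at $P_i$ --- precisely the move that turns ``one more node'' into ``one more step of the increasing sequence $D_\bullet$ at $P_i$'', and legitimizing it is where monotonicity of $D_\bullet$ and the conditions on $a_\bullet$ get used. The cases with some $d_i=-1$ reduce directly to a linear series on a single $\mathbb P^1$ with base points at the node images, handled as variants of the same argument.

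The hard part, and where I expect most of the work to lie, is genericity: the dimension counts above require that for a \emph{general} choice of node positions $Q^i_j$ and marked points $P_i$ the proportionality/rank conditions actually meet the linear series on $X'$ in the expected codimension on \emph{every} component --- equivalently, that the evaluation functionals $\alpha,\beta$ are as nondegenerate as possible on $G^r_{\underline d}(X')$ even after the multi-vanishing has been imposed. This forces one to carry through the induction a strengthened ``general position'' hypothesis controlling the images of the evaluation maps on these linked linear series --- the analogue, in the binary, not-of-compact-type setting, of the transversality underlying the classical Brill--Noether theorem --- and to propagate that hypothesis both through the ungluing and through the specializations $Q^i_g\to P_i$.
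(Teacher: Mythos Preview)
Your inductive framework---normalize at one node, then split on whether the evaluation functionals $\alpha,\beta$ at $Q_1,Q_2$ are both nonzero and proportional, both zero, or otherwise---is exactly the paper's setup (Lemma~\ref{lemma: induction step}); the ``both zero'' stratum is handled essentially as you say, though the paper twists down by only one of the two points, mapping into $G^r_{(d_1-1,d_2)}(X';D_\bullet,a_\bullet)$ of expected dimension $\rho-1$, which also sidesteps the edge case $a^i_r=d_i+1$. The genuine gap is exactly where you place it: bounding the dimension of the proportionality locus $G_{E_0}=\{\dim V(-Q_1-Q_2)\ge r\}$ inside $G^r_{\underline d}(X';D_\bullet,a_\bullet)$. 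Your proposal to carry an unspecified ``strengthened general-position hypothesis'' through the induction is not fleshed out, and it is not clear what statement about the evaluation maps could be both strong enough to force codimension $r$ and stable under both the ungluing and the specialization $Q_i\to P_i$.

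The paper does not attempt any such transversality statement. It bounds $\dim G_{E_0}$ by degeneration inside a \emph{proper} moduli space: since $\mathrm{Pic}^{\underline d}(X')$ is only a torus, $G^r_{\underline d}(X')$ is not proper, so one cannot simply let $Q_i\to P_i$ and cite upper semicontinuity. Instead one works over Esteves' compactified Jacobian $J^\sigma_{\mathcal E}$ and the proper family $\overline G^r_d(\mathfrak X'/\mathbb A^1_k,\mathcal E,\sigma;\mathcal D_\bullet,a_\bullet)$ of Proposition~\ref{prop:degeneration}. Sending first $Q_1\to P_1$ and then $Q_2\to P_2$, a combinatorial case analysis (the indices $m,n,h,u$ in the proof of Theorem~\ref{thm: main theorem}) tracks which of the inequalities $\dim V(-a^1_lP_1-a^2_lP_2)\ge r+1-l$ jump; the upshot is that the limit of $G_{E_0}$ lies in a finite union of spaces $\overline G^r_d(X',\mathscr E,P;D'_\bullet,a'_\bullet)$ with $a'_j=a_j+1$, the extra $+1$ going to the $P_1$-side for some $j$ and to the $P_2$-side for others. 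By Proposition~\ref{prop: stratification of linear series} each of these stratifies as $\bigcup_{J,\underline d_J}G^r_{\underline d_J}(X'_J;D'_\bullet,a'_\bullet)$ over partial normalizations $X'_J$ of $X'$---the boundary torsion-free sheaves being pushforwards of line bundles from $X'_J$---and the inductive hypothesis applies to every stratum. The polarization $\mathscr E$ must be chosen (and varied according to the sign of $d-g-r$, whence the four cases 1)--4) in the proof) so that the multidegrees $\underline d_J$ appearing in this stratification still satisfy the hypotheses of the theorem. In short, your intuition that ``$Q_i\to P_i$ turns one more node into one more step of $D_\bullet$'' is the right picture, but the rigorous version requires the compactification for properness, and the limit is not a single $G^r_{\underline d}(X';D'_\bullet,a'_\bullet)$ but a controlled union whose combinatorics is the technical core of the argument.
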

See Theorem \ref{thm: main theorem} for details. See also Section \ref{subsec:Multi-vanishing} for the notions of admissible sequences and the corresponding ramification conditions.
Our proof of Theorem \ref{thm: introduction} benefits from the idea of \cite{griffiths1980variety}. In their proof, the normalization $\widetilde C'$ of $C'$ is realized as a rational normal curve in $\mathbb P^d_k$. Let $(N^1_j, N^2_j)_{1\leq j\leq g}$ be the preimage in $\widetilde C'$ of each node of $C'$, as in Figure \ref{fig: introduction degeneration}. The space $G^r_d(C')$ is identified with an open subset of the space of $(d-r-1)$ planes of $\mathbb P^d_k$ that pass though the $g$ chords $\overline{N^1_jN^2_j}$ of $\widetilde C'$, where  $\overline{N^1_jN^2_j}$ meets $\widetilde C'$ at $N^1_j$ and $N^2_j$.
Then it is enough to show that the intersection $$\sigma(\overline{N^1_1N^2_1})\cap\cdots\cap\sigma(\overline{N^1_gN^2_g})$$ is dimensionally transverse. Here $\sigma(\overline{N^1_jN^2_j})\subset Gr(d-r,d+1)$ is the Schubert cycle of $(d-r-1)$-planes in $\mathbb P^d_k$ meeting $\overline{N^1_jN^2_j}$. Proceeded by induction, at each step they degenerate $N^1_m$ and $N^2_m$ to $N^1_1$ and get a union of Schubert varieties $S_m$ of the flag $$N^1_1\subset \overline{2N^1_1}\subset\cdots\subset \overline{(d+1)N^1_1}=\mathbb P^d_k,$$ where $\overline {jN^1_1}$ denotes the $j$-th osculating plane of $\widetilde C'$ at $N^1_1$. Then the problem is reduced to the properness of the intersection $S_m\cap\sigma(\overline {N^1_{m+1}N^2_{m+1}})\cap\cdots\cap\sigma(\overline {N^1_gN^2_g}).$

\begin{figure}[H]
\begin{tikzpicture}[scale=0.11]

    \draw[shift={(-70,-3)}][-] (47,-25) -- (55,-48);
    \draw[shift={(-70,-3)}][-] (43,-24) -- (42,-48);
    \draw[shift={(-70,-3)}][-] (40,-20) -- (27,-47);

    \draw[shift={(-70,-3)}][thick] plot [smooth,tension=1] coordinates{(40,-10)(38.5,-24.5)(56,-35)(45,-43)(23,-42)};
    
     \draw [shift={(-70,-3)}](38.2,-23.8) node[circle, fill=black, scale=0.3, label=left:{$N^1_g$}]{};
     
     \draw [shift={(-70,-3)}](42.8,-28.3) node[circle, fill=black, scale=0.3, label=left:{}]{};
     \draw [shift={(-70,-3)}](42.8,-30) node[circle, fill=black, scale=0, label=left:{$N^1_2$}]{};
     
     \draw [shift={(-70,-3)}](48.8,-30.5) node[circle, fill=black, scale=0.3, label=above:{}]{};
      \draw [shift={(-70,-3)}](50,-32.5) node[circle, fill=black, scale=0, label=left:{$N^1_1$}]{};
     
     \draw [shift={(-70,-3)}](52.5,-41) node[circle, fill=black, scale=0.3, label=left:{}]{};
     \draw [shift={(-70,-3)}](54,-44) node[circle, fill=black, scale=0, label=left:{$N^2_1$}]{};
     
     \draw [shift={(-70,-3)}](42.2,-43.5) node[circle, fill=black, scale=0.3, label=above:{}]{};
     \draw [shift={(-70,-3)}](42.2,-46) node[circle, fill=black, scale=0, label=left:{$N^2_2$}]{};
     
     \draw [shift={(-70,-3)}](29,-43) node[circle, fill=black, scale=0.3, label=above:{}]{};
      \draw [shift={(-70,-3)}](27,-43) node[circle, fill=black, scale=0, label=above:{$N^2_g$}]{};
     
     \draw [shift={(-70,-3)}](40,-10) node[circle, fill=black, scale=0, label=above:{$\widetilde C'$}]{};

 \draw [shift={(-70,-3)}](37,-40) node[circle, fill=black, scale=0, label=above:{$\cdots$}]{};

\end{tikzpicture}
\caption{}\label{fig: introduction degeneration}
\end{figure}

In principle, the idea of degenerating the chords of the normalization of $C'$ should also work in our context. However, we inevitably came across Schubert cycles with excess dimension during the degeneration. See Remark \ref{rem:issue} for details. To resolve this issue, we adopt Esteves' compactification \cite{esteves2001compactifying} of the Jacobian of $X$ (see \S \ref{subsec:compactified}), 
and construct a compactification of $G^r_{\underline d}(X)$ accordingly. We observe that the Schubert condition of $S_m$ induces a ramification condition at $N^1_1$ for linear series on $\widetilde C'$, and thus modify the proof of \cite{griffiths1980variety} to our case.  See Section \ref{sec: main} for details. 

As an application, our result shows that the space of Osserman limit linear series on a general binary curve has the expected dimension, which was known for $r\leq 2$ (\cite[Corollary 7.4]{osserman2014dimension}). 

\begin{cor}\label{cor:dimension of lls on binary curves}    
The space of Osserman-limit linear series on a general binary curve has the expected dimension.  As a result all limit linear series are smoothable to nearby  smooth curves.
\end{cor}
\begin{proof}
The first statement follows directly from Theorem \ref{thm: introduction} and \cite[Corollary 7.3]{osserman2014dimension}, while the second one is a result of the main smoothing theorem of limit linear series carried out in \cite{osserman2019limit}.
\end{proof}

Moreover, we comment that the smoothing property we get for limit linear series also provides potential tools for lifting rank-$r$ divisors on the graph $G$ associated to a regular family $\mathcal X$ of curves over a discrete valuation ring to divisors on the generic fiber of $\mathcal X$ that admit the same rank. More precisely, this can be achieved by lifting the divisor on $G$ to a limit linear series on the special fiber, which in our case is a general binary curve, and applying Corollary \ref{cor:dimension of lls on binary curves}. A similar approach can be found in \cite[\S 5]{he2019smoothing}. See also \cite[\S 10]{baker2016degeneration} for a survey on this problem, and \cite{he2018lifting} for results of lifting divisors while preserving both the rank and ramifications. Note that the liftability is only completely solved when $G$ is a chain of loops \cite{cartwright2014lifting}.

\subsection*{Conventions.} All curves we consider, unless otherwise specified, are assumed proper over an algebraically closed field $k$, reduced and connected, and at worst nodal.

\subsection*{Acknowledgements.} I would like to thank Brian Osserman for introducing this problem to me and for helpful conversations. I would also like to thank Eric Larson for making me aware of the idea in \cite{griffiths1980variety} and its connection to our case.

\section{The compactified relative Jacobian and moduli scheme of linear series}
In this section we recall the notion of compactified relative Jacobian over a family of (possibly reducible) curves carried out in \cite{esteves2001compactifying}. For each curve $X$ in the family, the compactified Jacobian contains extra points corresponding to certain torsion free, rank one sheaves on $X$. Note that for a pair $(I,V)$ consisting of a torsion free rank one sheaf $I$ on $X$ and a subspace $V\subset H^0(X,I)$, we can still talk about ramifications of $V$ at nonsingular points of $X$. Then for the degeneration argument of the next section, we construct a family of (compactified) moduli spaces of linear series with imposed ramifications on binary curves as a closed subscheme of a relative Grassmannian over the compactified relative Jacobian. Note that this construction is well-known, see for example \cite[\S 4]{osserman2006limit} or \cite[\S 2]{ossermanlimit}.

\subsection{The compactified relative Jacobian}\label{subsec:compactified}

 Let $X$ be a curve and $ I$ a coherent sheaf on $X$. We say that $I$ is \textbf{torsion free} if it has no embedded components and $I$ has \textbf{rank one} if it has generic rank one at every irreducible component of $X$. We say that $I$ is \textbf{simple} if $\mathrm{End}_X(I)=k$. According to \cite[Proposition 10.1]{oda1979compactifications}, each torsion free sheaf $ I$ of rank one can be written as the pushforward of a line bundle $ L_I$ on a partial normalization $n_I\colon  X_I\rightarrow X$. A direct calculation shows that the natural map
$$n_{I*}\mathscr E\mathrm{nd}(L_I)\rightarrow
\mathscr E\mathrm{nd}(n_{I*}L_I)$$ is an isomorphism. Hence a torsion free sheaf is simple if and only if the corresponding normalization $X_I$ is connected. 

Let $I$ be a torsion free sheaf of rank one on $X$. If $Y\subset X$ is a subcurve, then we denote by $I_Y$ the maximum torsion free quotient of $I|_Y$. Fix an integer $d$, and fix a line bundle $\mathscr E$ on $X$ of degree $d-g+1$. We say that $\mathscr E$ is a \textbf{polarization} on $X$. For every subcurve $Y\subset X$, let $e_Y:=\deg_Y(\mathscr E)$. We say that $I$ is \textbf{stable} (resp. \textbf{semistable}) with respect to $\mathscr E$ if for every nonempty proper subcurve $Y\subsetneq X$, we have 
$$\chi(I_Y)>e_Y\quad (\mathrm{resp.}\ \chi(I_Y)\geq e_Y).$$
Let $P\in X$ be a non-singular point. We say that a semistable sheaf $I$ on $X$ is $P$-\textbf{quasistable} with respect to $\mathscr E$ if $\chi(I_Y)>e_Y$ for all proper subcurves $Y$ of $X$ containing $P$. We refer to \cite{esteves2001compactifying} for more general definitions of polarization and stability. 

\begin{ex}\label{ex: polarization stableness}
Let $X$ be a binary curve with components $Z_1$ and $Z_2$. Fix an integer $y$. Take $P\in Z_1$. Let $\mathscr E=E_{d,g,y}(X)$ be a line bundle on $X$ with multi-degree $(\frac{d-g+y}{2},\frac{d-g-y}{2}+1)$ over $(Z_1,Z_2)$ if $2|d-g-y$ and multi-degree $(\frac{d-g+y+1}{2},\frac{d-g-y+1}{2})$ if $2\nmid d-g-y$. Then a simple torsion free rank one sheaf $I$ is $P$-quasistable if 
$$\chi(I_{Z_1})>\frac{d-g+y}{2}\mathrm{\ and\ }\chi(I_{Z_2})\geq\frac{d-g-y}{2}+1$$when $2| d-g-y$ and
$$\chi(I_{Z_1})>\frac{d-g+y+1}{2}\mathrm{\ and\ }\chi(I_{Z_2})\geq\frac{d-g-y+1}{2}$$
 when $2\nmid d-g-y$. 
 
 On the other hand, recall that in \cite{caporaso2010brill}, a multi-degree $(d_1,d_2)$ on a genus-$g$ binary curve is balanced if 
 $|d_1-d_2|\leq g+1$. We adapt this notion and say that $(d_1,d_2)$ is \textbf{$g$-balanced} if $|d_1-d_2|\leq g+1$. The advantage is that our adaption is independent of the underlying curve.  If $I$ is a line bundle on $X$ with multi-degree $(d_1,d_2)$ over $(Z_1,Z_2)$ and $I$ is $P$-quasistable with respect to $E_{d,g,0}(X)$, then $(d_1,d_2)$ is $g$-balanced.
  
 \end{ex}
 

Let $S$ be a locally Noetherian scheme over $k$. Let $\pi\colon \mathfrak X\rightarrow S$ be a projective, flat morphism whose fibers are curves of arithmetic genus $g$. Let $\mathcal I$ be an $S$-flat coherent sheaf on $\mathfrak X$. We say that $\mathcal I$ is \textbf{relatively torsion free} (resp. \textbf{rank one}, resp. \textbf{simple}) over $S$ if $\mathcal I(s)$ is torsion free (resp. rank one, resp. simple) for every (closed) point $s\in S$. We say that $\mathcal I$ has \textbf{relative degree $d$} over $S$ if $\mathcal I(s)$ has Euler characteristic $d-g+1$. Let $\mathcal E$ be a line bundle on $\mathfrak X$ of relative degree $d-g+1$ over $S$. We call such an $\mathcal E$ a \textbf{relative polarization} on $\mathfrak X$ over $S$. A relatively torsion free, rank one sheaf $\mathcal I$ on $\mathfrak X$ over $S$ is \textbf{relatively stable} (resp. \textbf{relatively semistable}) with respect to $\mathcal E$ over $S$ if $\mathcal I(s)$ is stable (resp. semistable) with respect to $\mathcal E(s)$ for all points $s\in S$.
Let $\sigma\colon S\rightarrow \mathfrak X$ be a section of $\pi$ through the smooth locus of $\mathfrak X$. 
A relatively torsion free, rank one sheaf $\mathcal I$ on $\mathfrak X$ over $S$ is \textbf{relatively $\sigma$-quasistable} with respect to $\mathcal E$ over $S$ if $\mathcal I(s)$ is $\sigma(s)$-quasistable with respect to $\mathcal E(s)$ for all points $s\in S$.

Let $\bm J^*$ denote the contravariant functor from the category of locally Noetherian $S$-schemes to sets, defined on an $S$-scheme $T$ by 
$$\bm J^*(T)=\{\mathrm{relatively\ simple,\ torsion\ free,\ rank\ one\ sheaves\ on}\ \mathfrak X\times_ST\ \mathrm{over}\ T\}/\sim,$$
where $``\sim"$ is the following equivalence relation:
$$\mathcal I_1\sim\mathcal I_2\Longleftrightarrow\mathrm{There\ is\ an\ invertible\ sheaf\ }M\mathrm{\ on\ } T\mathrm{\ such\ that\ }\mathcal I_1\cong\mathcal I_2\otimes \pi^*(M).$$
Let $\bm J$ be the \'etale sheaf associated to $\bm J^*$. Then $\bm J$ is represented by an algebraic space $J$ by \cite{altman1980compactifying}. Fix a degree $d$ and a relative polarization $\mathcal E$ on $\mathfrak X$ of relative degree $d-g+1$. Fix a section $\sigma$ of $\pi$. Let $J^\sigma_\mathcal E$ be the subspace of $J$ parametrizing relatively simple torsion free rank one sheaves on $\mathfrak X$ over $S$ that are relatively $\sigma$-quasistable with respect to $\mathcal E$. By \cite[Proposition 27]{esteves2001compactifying} $J^\sigma_\mathcal E$ is an open subspace of $J$.

\begin{thm}\label{section quasistable and properness}
(\cite[Theorem A (3)]{esteves2001compactifying}) $J^\sigma_\mathcal E$ is proper over $S$.
\end{thm}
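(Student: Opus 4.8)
This statement is quoted from \cite[Theorem A (3)]{esteves2001compactifying}, so the plan is to recall the shape of Esteves' argument, which runs through the valuative criterion of properness. First I would record that $J^\sigma_\mathcal E$ is of finite type over $S$: the semistable (hence in particular the $\sigma$-quasistable) torsion-free rank-one sheaves of degree $d$ on the fibers of $\pi$ form a bounded family, so the open subspace $J^\sigma_\mathcal E\subset J$ is quasi-compact over $S$. Since $S$ is locally Noetherian it then suffices to verify the valuative criterion using discrete valuation rings. Fix a DVR $R$ with fraction field $K$ and closed point $0$, a morphism $\mathrm{Spec}\,R\to S$, and write $\mathfrak X_R=\mathfrak X\times_S\mathrm{Spec}\,R$ with the induced section $\sigma_R$ and relative polarization $\mathcal E_R$; let $\mathfrak X_K$ and $\mathfrak X_0$ be the generic and special fibers. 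Given a relatively simple, torsion-free, rank-one sheaf $\mathcal I_K$ on $\mathfrak X_K$ that is $\sigma_R$-quasistable, the two things to establish are \emph{existence} of an extension of $\mathcal I_K$ to a relatively simple, torsion-free, rank-one, $\sigma_R$-quasistable sheaf on $\mathfrak X_R$, possibly after a finite extension of $R$ (this gives universal closedness), and \emph{uniqueness} of such an extension (this gives separatedness).

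For existence I would first produce \emph{some} $R$-flat, relatively torsion-free rank-one extension $\mathcal J$ of $\mathcal I_K$ --- for instance the image of $j_{!}\mathcal I_K\to j_{*}\mathcal I_K$ along the open immersion $j\colon\mathfrak X_K\hookrightarrow\mathfrak X_R$, then kill torsion and, after a finite base change of $R$ if necessary, arrange that the special fiber $\mathcal J_0$ is torsion-free of rank one and simple. In general $\mathcal J_0$ is not $\sigma$-quasistable, but one uses the structural fact that any two $R$-flat torsion-free rank-one extensions of $\mathcal I_K$ are related by twisting by a divisor supported on $\mathfrak X_0$; modulo the subtleties at the nodes, the multidegree of the special fiber then ranges over a coset of the lattice spanned by the component classes of $\mathfrak X_0$, and in the binary case $\mathfrak X_0=Z_1\cup Z_2$ this amounts to a single integer's worth of twists, moving $(\chi((\mathcal J_0)_{Z_1}),\chi((\mathcal J_0)_{Z_2}))$ by shifting one entry up and the other down according to the number of nodes. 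Minimizing over this orbit the instability defect $\max_{\emptyset\neq Y\subsetneq\mathfrak X_0}\bigl(e_Y-\chi((\mathcal J_0)_Y)\bigr)$ produces a semistable special fiber, and the strict inequality demanded at subcurves through $\sigma$ singles out a unique twist among the minimizers; this yields simultaneously a $\sigma$-quasistable extension and the uniqueness statement.

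The main obstacle is the structural input feeding this optimization rather than the optimization itself. One must know (a) that a torsion-free rank-one extension exists after a finite extension of $R$, which requires controlling the torsion-free-ification near the nodes of $\mathfrak X_0$, where $\mathfrak X_R$ need not be regular and is locally of the form $\mathrm{Spec}\,R[[x,y]]/(xy-t^{m})$; and (b) that two such extensions of a fixed generic fiber differ \emph{exactly} by such a twist along $\mathfrak X_0$, which is where the torsion-free and rank-one hypotheses are genuinely used and which is the technical heart of \cite{esteves2001compactifying} (the point being that the automorphisms coming from the identification over $K$ are pinned down by simplicity). By contrast the combinatorial selection of the quasistable representative is elementary and, in the binary situation relevant to the rest of the paper, reduces to the choice of a single integer; so in our setting the properness of $J^\sigma_\mathcal E$ could also be seen by hand on the punctured disk.
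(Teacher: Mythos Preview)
The paper does not give its own proof of this statement: it is stated purely as a citation of \cite[Theorem A (3)]{esteves2001compactifying}, with no argument supplied. Your proposal, which sketches Esteves' valuative-criterion argument (bounded family for finite type, existence of a torsion-free rank-one extension after finite base change, then twisting by components of the special fiber to reach the unique $\sigma$-quasistable representative), is a faithful outline of how the result is established in the cited reference, and the specialization to the binary case at the end is a reasonable sanity check for the application at hand. Since the paper itself offers nothing beyond the citation, there is no alternative approach to compare against; your write-up simply supplies content the paper chose to omit.
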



We will use $J^\sigma_{\mathcal E}$ as the  compactified relative Jacobian in the proof of our main theorem, and vary the polarization $\mathcal E$ for different situations.

\subsection{Multi-vanishing sequence and ramification.}\label{subsec:Multi-vanishing}
Let $X$ be a curve of genus $g$. Fix integers $r,d>0$ and non-singular points $P_1,...,P_t$ of $X$. Let $D_\bullet=(D_0,D_1,...)$ be an increasing sequence, possibly infinite, of effective divisors supported on $\{P_1,..., P_t\}$.

\begin{defn}\label{defn: generalized linear series}
A \textbf{generalized linear series} of degree $d$ and rank $r$ on $X$, denoted by a ``generalized $\mathfrak g^r_d$" on $X$, is a pair $(I,V)$ consisting of a simple torsion free sheaf $I$ of rank one on $X$ such that $\chi (I)=d-g+1$ and a subspace $V\subset H^0(X,I)$ of dimension $r+1$.
\end{defn}

We say that a generalized linear series $(I,V)$ is $P$-\textbf{quasistable} with respect to a non-singular point $P\in X$ and a polarization $\mathscr E$ on $X$ if $I$ is so. Recall the notion of multi-vanishing sequence in \cite{osserman2019limit}: 

\begin{defn}\label{multivanishing sequence}
Let $(I,V)$ be a generalized $\mathfrak g^r_d$ on $X$. Inserting $D_{-1}=0$ to $D_\bullet$ if necessary we may assume $D_0=0$. The \textbf{multi-vanishing order} of a section $s\in H^0(X,I)$ along $D_\bullet$ is the degree of $D_l$ where $l$ is the greatest number such that $s\in H^0(X,I(-D_l))$. The \textbf{multi-vanishing sequence} of $(I,V)$ along $D_\bullet$ is the sequence $$v_0\leq v_1\leq\cdots\leq v_r$$ where a value $a$ appears in the sequence $m$ times if for some $j$ we have $\deg D_j=a$ and $$ \left\{\begin{array}{ll}
  \dim V(-D_j)=m   & \mathrm{if\ } D_\bullet \mathrm{\ ends\ with\ } D_j,\\
   \dim(V(-D_j)/V(-D_{j+1}))=m   & \mathrm{\ otherwise.}
\end{array}
\right.$$
\end{defn}

\begin{ex}\label{ex:multi-vanishing sequence}
Let $X$ be the projective line, $I=\mathcal O(4)$ and take a linear series $V$ generated by $x,x^3$ and $x^4-x^2$. Denote $P_1=0$, $P_2=1$, and let $$D_\bullet=0, P_1+P_2, 2P_1+P_2,2P_1+2P_2,3P_1+2P_2.$$
Then the multi-vanishing order of $x$ (resp. $x^3$, resp. $x^4-x^2$) is 0 (resp.0, resp. 3), and the multi-vanishing sequence of $(I,V)$ is 0,2,3. 
\end{ex}

From now on, for simplicity let $X$ be a binary curve of genus $g$ with components $Z_1$ and $Z_2$, and take $P_i\in Z_i$ for $1\leq i\leq t=2$. The multi-vanishing sequence of a generalized linear series along $D_\bullet$ satisfies the condition of being an admissible sequence, as defined below: 
 
\begin{defn}\label{defn: admissible sequence}
A nondecreasing sequence $(a_j)_{0\leq j\leq r}$ is called an \textbf{admissible sequence} along $D_\bullet$ if for each $j$ we have $a_j=\deg D_{l_j}$ for some $l_j$, and, whenever $D_{l_j+1}$ is defined, the number of repetitions of $a_j$ is at most $\deg (D_{l_j+1}-D_{l_j})$. For a multidegree $\underline d=(d_1,d_2)$ over $(Z_1,Z_2)$, we say that an admissible sequence $a_\bullet$ along $D_\bullet$ is \textbf{$\underline d$-bounded} if $$D_{l_r}\leq (d_1+1)P_1+(d_2+1)P_2,$$ and the number of repetitions of $\deg D_{l_r}$ in $a_\bullet$ is at most $d_1+d_2+3-\deg D_{l_r}$.
\end{defn}

\begin{rem}\label{rem:multi-vanishing}
(1) Note that, for an admissible sequence $a_\bullet$ along $D_\bullet$, the generalized linear series $(I,V)$ has multi-vanishing sequence at least $a_\bullet$ along $D_\bullet$ if and only if $\dim V(-D_{l_j})\geq r+1-j$ for all $0\leq j\leq r$, where $l_j$ is the number such that $a_j=\deg D_{l_j}$, as in Definition \ref{defn: admissible sequence}. As a result, if $\widetilde D_\bullet$ is an extension of $D_\bullet$ and $a_\bullet$ is an admissible sequence along $\widetilde D_\bullet$, then $(I,V)$ has multi-vanishing sequence at least $a_\bullet$ along $D_\bullet$ if and only if it has multi-vanishing sequence at least $a_\bullet$ along $\widetilde D_\bullet$.

(2) Fix an admissible sequence $a_\bullet$ along $D_\bullet$. Extending $D_\bullet$ if necessary we may  assume that there exists a term in $D_\bullet$ which has degree strictly more than $a_r$. Set $a_{-1}=-1$.
Let $D'_\bullet$ be the sequence of effective divisors obtained from $D_\bullet$ as follows: for all $(l,j,c)$ such that $$a_{j-1}<\deg D_l=a_j=a_{j+1}=\cdots=a_{j+c}<a_{j+c+1}$$ or $$a_{j-1}<\deg D_l=a_j=a_{j+1}=\cdots=a_r \mathrm{\ and\ } j+c=r,$$ insert a sequence of divisors $D_{l,1},...,D_{l,c}$ to $D_\bullet$ such that 
$$D_l< D_{l,1}< D_{l,2}< \cdots < D_{l,c}<D_{l+1}
\mathrm{\ and\ }\deg D_{l,u}=\deg D_l+u \mathrm{\ for\ all\ } 1\leq u\leq c.$$
Accordingly, for all $(j,c)$ as above replacing $a_{j+u}$ by $a_j+u$ for $1\leq u\leq c$ we obtain an admissible  sequence $a'_\bullet$ along $D'_\bullet$. Let $D''_\bullet$ be obtained from $D'_\bullet$ by removing divisors whose degree does not appear in $a'_\bullet$. Then $(I,V)$ has multi-vanishing sequence at least $a_\bullet$ along $D_\bullet$ if and only if it has multi-vanishing sequence at least $a'_\bullet$ along $D'_\bullet$, which is equivalent to having multi-vanishing sequence at least $a'_\bullet$ along $D''_\bullet$. Note that $D''_\bullet=(D''_0,...,D''_r)$ and $a'_j=\deg D''_j$ for $0\leq j\leq r$. Moreover, if $a_\bullet$ is $\underline d$-bounded, we can also pick $D''_\bullet$ such that  $D''_\bullet\leq (d_1+1)P_1+(d_2+1)P_2$. 
\end{rem}

\subsection{A family of moduli schemes of generalized linear series with ramifications}
\label{subsec: family}

Consider the family $\pi\colon \mathfrak X=X\times \mathbb A^1_k\rightarrow \mathbb A^1_k$. Fix a polarization $\mathcal E$ of relative multi-degree $(e_1,e_2)$ and a section $\sigma$ of $\pi$ through the smooth locus. It follows from \cite[Theorem B]{esteves2001compactifying} that $J^\sigma_{\mathfrak X,\mathcal E}=J^\sigma_\mathcal E$ is a scheme. Let $\mathcal L^\sigma_{\mathfrak X,\mathcal E}$ be the universal sheaf over $\mathfrak J^{\sigma}_{\mathfrak X,\mathcal E}=J^{\sigma}_{\mathfrak X,\mathcal E}\times_{\mathbb A^1_k}\mathfrak X$. Let $\widetilde \pi\colon \mathfrak X\rightarrow X$ and $p_1\colon \mathfrak J^\sigma_{\mathfrak X,\mathcal E}\rightarrow J^\sigma_{\mathfrak X,\mathcal E}$ and $p_2\colon \mathfrak J^\sigma_{\mathfrak X,\mathcal E}\rightarrow\mathfrak X$ be the natural maps as in the diagram below. For $1\leq i\leq 2$ let $D_i$ be an effective divisor on $Z_i\backslash (Z_1\cap Z_2)$ of degree at least $g+1-e_i$, and $\mathcal D'$ be the divisor $p^*_2(\widetilde\pi^*(D_1+D_2))$ on $\mathfrak J^\sigma_{\mathfrak X,\mathcal E}$. 

$$\begin{tikzcd}  \mathfrak J^\sigma_{\mathfrak X,\mathcal E}\rar{p_2}\dar{p_1} &\mathfrak X\dar{\pi }\rar{\widetilde\pi}&X\\ J^\sigma_{\mathfrak X,\mathcal E}\rar{} &\mathbb A^1_k
\end{tikzcd}$$


Note that the restriction of $\mathcal L^\sigma_{\mathfrak X,\mathcal E}(\mathcal D')$ at each fiber of $\mathfrak J^\sigma_{\mathfrak X,\mathcal E}$ over $J^\sigma_{\mathfrak X,\mathcal E}$ has a $(d-g+1+\deg(D_1+D_2))$-dimensional space of global sections and trivial first cohomology group. Indeed, for each torsion free sheaf $I$ in $J^\sigma_{\mathfrak X,\mathcal E}$ we have 
$$I\otimes \mathcal O_X(D_1+D_2)=n_{I*}(L_I\otimes \mathcal O_{X_I}(D_1+D_2)),$$
where $n_I$, $X_I$, and $L_I$ are defined in \S\ref{subsec:compactified}. Since $n_I$ is an affine morphism, by \cite[Exercise 3.8.2]{hartshorne1977algebraic} we have $$h^i(I\otimes \mathcal O_X(D_1+D_2))=h^i(L_I\otimes \mathcal O_{X_I}(D_1+D_2))\mathrm{\ for\ } i=0,1.$$ As $I_{Z_i}=L_I|_{Z_i}$ for each $1\leq i\leq 2$, the line bundle $L_I\otimes \mathcal O_{X_I}(D_1+D_2)$ has multi-degree at least $(g,g)$ along $(Z_1,Z_2)$ by semi-stability of $I$. Therefore the conclusion follows from Riemann-Roch theorem.

By \cite[\S 0.5]{mumford1994geometric} the pushforward $p_{1*}\mathcal L^\sigma_{\mathfrak X,\mathcal E}(\mathcal D')$ is a vector bundle of rank $d-g+1+\deg(D_1+D_2)$ on $J^\sigma_{\mathfrak X,\mathcal E}$ whose fibers at a point are identified with the global sections of the restriction of $\mathcal L^\sigma_{\mathfrak X,\mathcal E}(\mathcal D')$ to the fiber of $\mathfrak J^\sigma_{\mathfrak X,\mathcal E}$ at that point.

\begin{defn}\label{family of bn loci}
The family $\overline W^r_d(\mathfrak X/\mathbb A^1_k)$ of Brill-Noether loci over $\mathbb A^1_k$ is the locus on $J^\sigma_{\mathfrak X,\mathcal E}$ on which the map 
$$p_{1*}\mathcal L^\sigma_{\mathfrak X,\mathcal E}(\mathcal D')\rightarrow
p_{1*}\mathcal L^\sigma_{\mathfrak X,\mathcal E}(\mathcal D')|_{\mathcal D'}$$ has kernel of dimension at least $r+1$.
Here $\mathcal L^\sigma_{\mathfrak X,\mathcal E}(\mathcal D')|_{\mathcal D'}$ is the restriction of $\mathcal L^\sigma_{\mathfrak X,\mathcal E}(\mathcal D')$ on $\mathcal D'$:
\end{defn}

In the definition above, $\mathcal L^\sigma_{\mathfrak X,\mathcal E}$ is not a line bundle over $\mathfrak J^\sigma_{\mathfrak X,\mathcal E}$, but we still have the exact sequence 
$$0\rightarrow \mathcal L^\sigma_{\mathfrak X,\mathcal E}\rightarrow \mathcal L^\sigma_{\mathfrak X,\mathcal E}(\mathcal D')\rightarrow\mathcal L^\sigma_{\mathfrak X,\mathcal E}(\mathcal D')|_{\mathcal D'}\rightarrow 0,$$
since $\mathcal L^\sigma_{\mathfrak X,\mathcal E}$ is locally free at points where $\mathcal O_{\mathcal D'}$ is nonzero. Hence the kernel of the map in Definition \ref{family of bn loci} at a point of $J^\sigma_{\mathfrak X,\mathcal E}$ is identified with the space of global sections of the corresponding torsion free sheaf on $X$.

\begin{defn}\label{family of linear series}
Let $p\colon Gr(r+1,p_{1*}\mathcal L^\sigma_{\mathfrak X,\mathcal E}(\mathcal D'))\rightarrow J^\sigma_{\mathfrak X,\mathcal E}$ be the relative Grassmannian over $J^\sigma_{\mathfrak X,\mathcal E}$ and $\mathscr V\rightarrow p^*p_{1*}\mathcal L^\sigma_{\mathfrak X,\mathcal E}(\mathcal D')$ be the tautological subbundle. The family $\overline G^r_d(\mathfrak X/\mathbb A^1_k,\mathcal E,\sigma)$ of moduli spaces of generalized linear series on $X$ over $\mathbb A^1_k$ is the locus on $Gr(r+1,p_{1*}\mathcal L^\sigma_{\mathfrak X,\mathcal E}(\mathcal D'))$ where the composed map 
$$\mathscr V\rightarrow p^*p_{1*}\mathcal L^\sigma_{\mathfrak X,\mathcal E}(\mathcal D')\rightarrow p^*p_{1*}\mathcal L^\sigma_{\mathfrak X,\mathcal E}(\mathcal D')|_{\mathcal D'}$$ 
is zero.
\end{defn}

Let $\mathcal D$ be a divisor on $\mathfrak X$ supported on the smooth locus over $\mathbb A^1_k$. Suppose $\mathcal O_\mathcal D=\mathcal O_\mathfrak X/\mathcal O_\mathfrak X(-\mathcal D)$ is flat over $\mathbb A^1_k$. Then we can construct a family $\overline G^r_d(\mathfrak X/\mathbb A^1_k,\mathcal E,\sigma,\mathcal D,b)$ of moduli space of generalized linear series on $X$ over $\mathbb A^1_k$ with given ramification (corresponding to $b$) at $\mathcal D$ as the locus on $\overline G^r_d(\mathfrak X/\mathbb A^1_k,\mathcal E,\sigma)$ on which the induced map 
$$\mathscr V\rightarrow p^*p_{1*}(\mathcal L^\sigma_{X,\mathcal E}|_{p^*_2(\mathcal D)})$$
has kernel of dimension at least $b$.

\begin{defn}\label{family of imposed ramification}
For $i=1,2$ let $\mathcal P_i\subset Z_i\times \mathbb A^1_k$ be two sections of $\pi$ supported on the smooth locus of 
$\mathfrak X$. Let $\mathcal D_\bullet=a^1_\bullet \mathcal P_1+a^2_\bullet\mathcal P_2$ be an increasing sequence of effective divisors and $a_\bullet$ an admissible sequence along $\mathcal D_\bullet$ (or along $(\mathcal D_\bullet)_z$ for each $z\in \mathbb A^1_k$). For each $0\leq j\leq r$ take $l_j$ such that $a_j=a^1_{l_j}+a^2_{l_j}=\deg \mathcal D_{l_j}$. The family $\overline G^r_d(\mathfrak X/\mathbb A^1_k,\mathcal E,\sigma;\mathcal D_\bullet, a_\bullet)$ of moduli spaces over $\mathbb A^1_k$ of generalized linear series on $X$ with ramification imposed by $(\mathcal D_\bullet, a_\bullet)$ 
is the intersection over all $0\leq j\leq r$ of $\overline G^r_d(\mathfrak X/\mathbb A^1_k,\mathcal E,\sigma,\mathcal D_{l_j},r+1-j)$ in $\overline G^r_d(\mathfrak X/\mathbb A^1_k,\mathcal E,\sigma)$. 
\end{defn}

By \cite[\S0.5]{mumford1994geometric} our constructions above is compatible with base extension $T\rightarrow \mathbb A^1_k$. In particular, we have the following proposition:

\begin{prop}\label{prop:degeneration}
The spaces $\overline W^r_d(\mathfrak X/\mathbb A^1_k,\mathcal E,\sigma)$, $\overline G^r_d(\mathfrak X/\mathbb A^1_k,\mathcal E,\sigma)$, $\overline G^r_d(\mathfrak X/\mathbb A^1_k,\mathcal E,\sigma,\mathcal D,b)$ and \\
$\overline G^r_d(\mathfrak X/\mathbb A^1_k,\mathcal E,\sigma;\mathcal D_\bullet, a_\bullet)$ are all proper over $\mathbb A^1_k$. Moreover, we have

(1) The fiber of $\overline W^r_d(\mathfrak X/\mathbb A^1_k,\mathcal E,\sigma)$ at $z\in \mathbb A^1_k$ parameterizes $\sigma(z)$-stable (with respect to $\mathcal E_z$) simple torsion free sheaves of rank one on $X$ which have global sections of dimension at least $r+1$.

(2) The fiber of $\overline G^r_d(\mathfrak X/\mathbb A^1_k,\mathcal E,\sigma)$ at $z\in \mathbb A^1_k$ parameterizes $\sigma(z)$-quasistable generalized $\mathfrak g^r_d$s with respect to $\mathcal E_z$ on $X$.

(3) The fiber of $\overline G^r_d(\mathfrak X/\mathbb A^1_k,\mathcal E,\sigma,\mathcal D,b)$ at $z\in\mathbb A^1_k$ parametrizes $\sigma(z)$-quasistable generalized $\mathfrak g^r_d$s $(I,V)$ with respect to $\mathcal E_z$ on $\mathfrak X_z=X$ such that $\dim V(-\mathcal D_z)\geq b$.

(4) The fiber of $\overline G^r_d(\mathfrak X/\mathbb A^1_k,\mathcal E,\sigma;\mathcal D_\bullet, a_\bullet)$ at $z\in \mathbb A^1_k$ parameterizes $\sigma(z)$-quasistable generalized $\mathfrak g^r_d$s with respect to $\mathcal E_z$ on $\mathfrak X_z=X$ whose multi-vanishing sequence along $(\mathcal D_\bullet)_z$ is at least $a_\bullet$.
\end{prop}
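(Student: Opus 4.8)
The plan is to realize each of the four spaces as a closed subscheme of a scheme that is already proper over $\mathbb{A}^1_k$, and then to read off the fibers by cohomology and base change. First I would note that $J^\sigma_{\mathfrak X,\mathcal E}$ is a scheme (by \cite[Theorem B]{esteves2001compactifying}, as recalled in \S\ref{subsec: family}) and is proper over $\mathbb{A}^1_k$ by Theorem \ref{section quasistable and properness}; and, as recalled just before Definition \ref{family of bn loci} using \cite[\S 0.5]{mumford1994geometric} together with the fiberwise vanishing $h^1(I\otimes\mathcal O_X(D_1+D_2))=0$, the sheaf $p_{1*}\mathcal L^\sigma_{\mathfrak X,\mathcal E}(\mathcal D')$ is a vector bundle on $J^\sigma_{\mathfrak X,\mathcal E}$, so the relative Grassmannian $Gr(r+1,p_{1*}\mathcal L^\sigma_{\mathfrak X,\mathcal E}(\mathcal D'))$ is projective over $J^\sigma_{\mathfrak X,\mathcal E}$ and hence proper over $\mathbb{A}^1_k$.

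Next I would check that each of the four spaces is cut out by a closed condition inside one of these two ambient spaces. The locus $\overline W^r_d(\mathfrak X/\mathbb{A}^1_k,\mathcal E,\sigma)$ is the degeneracy locus where the morphism of vector bundles of Definition \ref{family of bn loci} has kernel of dimension at least $r+1$; such degeneracy loci are closed, so $\overline W^r_d$ is a closed subscheme of $J^\sigma_{\mathfrak X,\mathcal E}$ and thus proper over $\mathbb{A}^1_k$. The locus $\overline G^r_d(\mathfrak X/\mathbb{A}^1_k,\mathcal E,\sigma)$ is the zero locus of the composite $\mathscr V\to p^*p_{1*}\mathcal L^\sigma_{\mathfrak X,\mathcal E}(\mathcal D')|_{\mathcal D'}$ of Definition \ref{family of linear series}, i.e.\ the vanishing locus of a section of a bundle of homomorphisms over the relative Grassmannian, hence closed and proper over $\mathbb{A}^1_k$. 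Likewise $\overline G^r_d(\mathfrak X/\mathbb{A}^1_k,\mathcal E,\sigma,\mathcal D,b)$ is a further degeneracy locus inside $\overline G^r_d(\mathfrak X/\mathbb{A}^1_k,\mathcal E,\sigma)$---here the hypothesis that $\mathcal O_\mathcal D$ is flat over $\mathbb{A}^1_k$ is what makes the target $p^*p_{1*}(\mathcal L^\sigma_{\mathfrak X,\mathcal E}|_{p^*_2\mathcal D})$ a vector bundle---and $\overline G^r_d(\mathfrak X/\mathbb{A}^1_k,\mathcal E,\sigma;\mathcal D_\bullet,a_\bullet)$ is by definition the finite intersection $\bigcap_{j=0}^{r}\overline G^r_d(\mathfrak X/\mathbb{A}^1_k,\mathcal E,\sigma,\mathcal D_{l_j},r+1-j)$; both are therefore closed subschemes of a scheme proper over $\mathbb{A}^1_k$, hence proper over $\mathbb{A}^1_k$. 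This settles the properness statements.

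For the fiber descriptions I would fix $z\in\mathbb{A}^1_k$ and, using that all the pushforwards in sight commute with base change, argue over the fiber $X=\mathfrak X_z$. At a $k$-point $[I]$ of $J^\sigma_{\mathfrak X,\mathcal E}$ over $z$ the sheaf $I$ is $\sigma(z)$-quasistable by construction of $J^\sigma_{\mathfrak X,\mathcal E}$, and base change identifies the fiber of $p_{1*}\mathcal L^\sigma_{\mathfrak X,\mathcal E}(\mathcal D')$ with $H^0(X,I\otimes\mathcal O_X(D_1+D_2))$, since $\mathcal D'=p^*_2\widetilde\pi^*(D_1+D_2)$. From the exact sequence $0\to\mathcal L^\sigma_{\mathfrak X,\mathcal E}\to\mathcal L^\sigma_{\mathfrak X,\mathcal E}(\mathcal D')\to\mathcal L^\sigma_{\mathfrak X,\mathcal E}(\mathcal D')|_{\mathcal D'}\to 0$ displayed after Definition \ref{family of bn loci} (valid because $\mathcal D'$ avoids the singular locus, where $\mathcal L^\sigma_{\mathfrak X,\mathcal E}$ fails to be locally free) the kernel of the map of Definition \ref{family of bn loci} at $[I]$ is exactly $H^0(X,I)$, which yields (1). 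A $k$-point of the relative Grassmannian over $[I]$ is an $(r+1)$-dimensional subspace $V\subset H^0(X,I\otimes\mathcal O_X(D_1+D_2))$, and vanishing of the composite of Definition \ref{family of linear series} says precisely $V\subset H^0(X,I)$, i.e.\ $(I,V)$ is a $\sigma(z)$-quasistable generalized $\mathfrak g^r_d$, which yields (2). The induced map $\mathscr V\to p^*p_{1*}(\mathcal L^\sigma_{\mathfrak X,\mathcal E}|_{p^*_2\mathcal D})$ restricts on the fiber over $(I,V)$ to the evaluation $V\to H^0(\mathcal D_z,I|_{\mathcal D_z})$ (base change again, via flatness of $\mathcal O_\mathcal D$), whose kernel is $V(-\mathcal D_z)$; so its kernel has dimension at least $b$ iff $\dim V(-\mathcal D_z)\ge b$, giving (3). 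Finally (4) follows by applying (3) with $(\mathcal D,b)=(\mathcal D_{l_j},r+1-j)$ for each $0\le j\le r$ and invoking the reformulation of ``multi-vanishing sequence at least $a_\bullet$'' given in Remark \ref{rem:multi-vanishing}(1).

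The main obstacle is not a new idea but the base-change bookkeeping hidden in the phrase ``commute with base change'' above: one must verify that each pushforward commutes with arbitrary base change---which reduces to the fiberwise $H^1$-vanishing already established in \S\ref{subsec: family} and to the flatness hypotheses on $\mathcal O_{\mathcal D'}$ and $\mathcal O_\mathcal D$ over $\mathbb{A}^1_k$---and that the universal sheaf $\mathcal L^\sigma_{\mathfrak X,\mathcal E}$, which exists only \'etale-locally on $J^\sigma_{\mathfrak X,\mathcal E}$ and only up to twist by a line bundle pulled back from the base, nonetheless defines the four loci unambiguously. The latter holds because every condition defining $\overline W^r_d$, $\overline G^r_d(\cdots)$, $\overline G^r_d(\cdots,\mathcal D,b)$ and $\overline G^r_d(\cdots;\mathcal D_\bullet,a_\bullet)$ is invariant under twisting $\mathcal L^\sigma_{\mathfrak X,\mathcal E}$ and $\mathscr V$ simultaneously by such a line bundle, so the local pieces glue by \'etale descent---this is the ``well-known'' construction referred to at the start of the section. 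Since the proposition only asserts the set-theoretic description of the fibers, no comparison of scheme structures is needed.
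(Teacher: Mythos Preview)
Your proposal is correct and follows essentially the same approach as the paper, which in fact gives no proof beyond the single sentence preceding the proposition: ``By \cite[\S0.5]{mumford1994geometric} our constructions above is compatible with base extension $T\rightarrow \mathbb A^1_k$.'' Your argument is a careful unpacking of exactly this: properness follows because each space is a closed (determinantal or zero-locus) subscheme of either $J^\sigma_{\mathfrak X,\mathcal E}$ or the relative Grassmannian over it, both proper over $\mathbb A^1_k$ by Theorem \ref{section quasistable and properness}; and the fiber descriptions follow from cohomology and base change, which applies because the relevant $h^1$ vanishes fiberwise (as established in \S\ref{subsec: family}) and the divisors are flat over $\mathbb A^1_k$. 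Your additional remark about the universal sheaf existing only \'etale-locally up to twist is more careful than the paper, which simply writes ``Let $\mathcal L^\sigma_{\mathfrak X,\mathcal E}$ be the universal sheaf''; in the present situation the section $\sigma$ provides a rigidification, so a genuine universal sheaf exists and your descent argument, while correct, is not strictly needed.
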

Since $\overline G^r_d(\mathfrak X/\mathbb A^1_k,\mathcal E,\sigma;\mathcal D_\bullet, a_\bullet)$ is a Schubert subscheme (cf. \cite[\S C.2]{ossermanlimit}) of $\overline G^r_d(\mathfrak X/\mathbb A^1_k,\mathcal E,\sigma)$, and $\overline G^r_d(\mathfrak X/\mathbb A^1_k,\mathcal E,\sigma)$ is a determinantal subscheme of $Gr(r+1,p_{1*}\mathcal L^\sigma_{\mathfrak X,\mathcal E}(\mathcal D'))$, the fiber at each $z\in \mathbb A^1_k$ of $\overline G^r_d(\mathfrak X/\mathbb A^1_k,\mathcal E,\sigma;\mathcal D_\bullet, a_\bullet)$ (resp. $\overline G^r_d(\mathfrak X/\mathbb A^1_k,\mathcal E,\sigma)$) is expected to have dimension 
$$g-(r+1)(g-d+r)-\bigg(\sum_{j=0}^{r}(a_j-j)+\sum_{l\geq 0}\binom{r_l}{2}\bigg)\ \ (\mathrm{resp.\ } \rho_{g,r,d}).$$Here $r_l$ is the number of repetitions of $\deg D_l$ in $a_\bullet$


\subsection{A decomposition of the space of generalized linear series.}\label{subsec: stratifying} Now for a line bundle $\mathscr E$ on $X$ and a non-singular point $P$ of $X$ we denote by $\overline W^r_d(X,\mathscr E,P)$ (resp. $\overline G^r_d(X,\mathscr E,P)$) the space parameterizing $P$-quasistable simple torsion free, rank one sheaves (resp. generalized $\mathfrak g^r_d$s) on $X$. For a sequence $D_0,...,D_r$ of increasing effective divisors supported on $\{P_1, P_2\}$, where $P_i$ is a non-singular point in $Z_i$, as in \S\ref{subsec:Multi-vanishing}, denote by $\overline G^r_d(X,\mathscr E,P; D_\bullet, a_\bullet)$ the space parametrizing $P$-quasistable generalized $\mathfrak g^r_d$s with multi-vanishing sequence at least $a_\bullet$ along $D_\bullet$.

On the other hand, fix a multi-degree $\underline d=(d_1,d_2)$ over $(Z_1,Z_2)$. By a $\mathfrak g^r_{\underline d}$ on $X$ we mean a linear series $(L,V)$ on $X$ such that $L$ has multi-degree $\underline d$ and $V$ has dimension $r+1$. We denote by $W^r_{\underline d}(X)$ (resp. $G^r_{\underline d}(X)$, resp. $G^r_{\underline d}(X;D_\bullet,a_\bullet)$) the space parametrizing line bundles of multi-degree $\underline d$ on $X$ whose space of global sections has dimension at least $r+1$ (resp. $\mathfrak g^r_{\underline d}$s on $X$, resp. $\mathfrak g^r_{\underline d}$s on $X$ whose multi-vanishing sequence along $D_\bullet$ is at least $a_\bullet$).

Moreover, note that the stability for a torsion free sheaf $I$ of rank one on $X$ with respect to $\mathscr E$ only depends on the multi-degree of $L_I$. Accordingly, we say that a multi-degree $(d_1,d_2)$ is \textbf{stable} (resp. \textbf{semistable}, resp. $P$-\textbf{quasistable}) if $d_i+1>\deg\mathscr E|_{Z_i}$ for each $i$ (resp. $d_i+1\geq \deg\mathscr E|_{Z_i}$ for each $i$, resp. $d_i+1>\deg\mathscr E|_{Z_i}$ if $P\in Z_i$ and $d_i+1\geq \deg\mathscr E|_{Z_i}$ otherwise) for convenience.
Hence $I$ is $P$-quasistable with respect to $\mathscr E$ if and only if the multi-degree of $L_I$ is $P$-quasistable with respect to $\mathscr E$. Note also that the global sections of $I$ are identified with the global sections of $L_I$. As a result, we have 

\begin{prop}\label{prop: stratification of linear series} 
Let $S$ be the set of singular points of $X$. For each subset $J\subsetneq  S$ let $X_J$ be the normalization of $X$ at $J$. Then
$$\overline W^r_d(X,\mathscr E,P)=\bigcup_{J\subsetneq S}\bigcup_{\underline d_J} W^r_{\underline d_J}(X_J),$$
and 
$$\overline G^r_d(X,\mathscr E,P)=\bigcup_{J\subsetneq S}\bigcup_{\underline d_J}  G^r_{\underline d_J}(X_J),$$
and 
$$\overline G^r_d(X,\mathscr E,P; D_\bullet, a_\bullet)=\bigcup_{J\subsetneq S}\bigcup_{\underline d_J} G^r_{\underline d_J}(X_J; D_\bullet, a_\bullet).$$
Here $\underline d_J$ runs over all multi-degrees $(d^1_J,d^2_J)$ which are $P$-quasistable with respect to $\mathscr E$ such that $d^1_J+d^2_J=d-|J|.$
\end{prop}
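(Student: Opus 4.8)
The plan is to produce, for each of the three spaces on the left, a natural bijection with the set of points of the disjoint union on the right, using the structure result that every torsion free rank one sheaf on $X$ is the pushforward of a line bundle from a partial normalization. Concretely, I would match a point of $\overline G^r_d(X,\mathscr E,P;D_\bullet,a_\bullet)$ with a triple $(J,\underline d_J,(L,V))$ where $(L,V)$ is a point of $G^r_{\underline d_J}(X_J;D_\bullet,a_\bullet)$, and likewise (forgetting $V$, resp. forgetting the ramification datum) for $\overline W^r_d$ and $\overline G^r_d$.

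For the forward direction, let $(I,V)$ be a point of $\overline G^r_d(X,\mathscr E,P;D_\bullet,a_\bullet)$. By \cite[Proposition 10.1]{oda1979compactifications} write $I=n_{I*}L_I$ with $n_I\colon X_I\to X$ a partial normalization and $L_I$ a line bundle; let $J\subset S$ be the set of nodes at which $I$ is not locally free, so $X_I=X_J$. Simplicity of $I$ forces $X_J$ to be connected (\S\ref{subsec:compactified}), and since the total normalization $X_S=Z_1\sqcup Z_2$ is disconnected we get $J\subsetneq S$; in particular $X_J$ still has two rational components, so a multi-degree on $X_J$ is a pair. Since $n_J$ is finite, $\chi(L_I)=\chi(I)=d-g+1$; as $p_a(X_J)=g-|J|$, Riemann--Roch gives $\deg L_I=d-|J|$, and setting $\underline d_J=(\deg_{Z_1}L_I,\deg_{Z_2}L_I)$ we have $d^1_J+d^2_J=d-|J|$. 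Because $I_{Z_i}=L_I|_{Z_i}$ for $i=1,2$, we get $\chi(I_{Z_i})=d^i_J+1$, so $P$-quasistability of $I$ with respect to $\mathscr E$ is exactly the condition that $\underline d_J$ be $P$-quasistable with respect to $\mathscr E$ in the sense of \S\ref{subsec: stratifying}. Finally, as $n_J$ is affine and the $D_j$ are supported on $\{P_1,P_2\}\subset X\setminus(Z_1\cap Z_2)$, the projection formula together with \cite[Exercise 3.8.2]{hartshorne1977algebraic} gives $H^0(X,I(-D_j))=H^0(X_J,L_I(-n_J^*D_j))$ for all $j$; identifying $D_\bullet$ with $n_J^*D_\bullet$ and $V$ with its image in $H^0(X_J,L_I)$, the multi-vanishing sequence of $(L_I,V)$ along $D_\bullet$ (Definition \ref{multivanishing sequence}) coincides with that of $(I,V)$, so $(L_I,V)$ is a point of $G^r_{\underline d_J}(X_J;D_\bullet,a_\bullet)$.

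Conversely, given $J\subsetneq S$, a $P$-quasistable multi-degree $\underline d_J$ with $d^1_J+d^2_J=d-|J|$, and a point $(L,V)$ of $G^r_{\underline d_J}(X_J;D_\bullet,a_\bullet)$, I would set $I=n_{J*}L$: it is torsion free of rank one, simple because $X_J$ is connected, has $\chi(I)=\chi(L)=d-g+1$, and is $P$-quasistable with respect to $\mathscr E$ by the same equivalence of inequalities; the identification $H^0(X,I(-D_j))=H^0(X_J,L(-n_J^*D_j))$ shows $(I,V)\in\overline G^r_d(X,\mathscr E,P;D_\bullet,a_\bullet)$. The two assignments are mutually inverse, since the non-locally-free locus of $n_{J*}L$ recovers $J$ and $L$ is recovered as the line bundle whose pushforward is $I$; this yields the three claimed equalities, and in fact exhibits the left-hand sides as stratified by the locally closed pieces indexed by $(J,\underline d_J)$, which is the point of the subsection.

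The one genuinely delicate input, which I would treat carefully rather than as folklore, is the compatibility $I_{Z_i}=L_I|_{Z_i}$ of restriction to a component with the maximal torsion free quotient, since this is what converts the quasistability of $I$ (phrased via $\chi(I_Y)$ on subcurves $Y$) into a numerical condition on $\underline d_J$; everything else — the Euler characteristic bookkeeping forcing $d^1_J+d^2_J=d-|J|$, the constraint $J\subsetneq S$ coming from simplicity, and the matching of sections and of multi-vanishing sequences — then follows formally from $n_J$ being finite and affine and from the $P_i$ being nonsingular points disjoint from the normalized nodes.
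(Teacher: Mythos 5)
Your argument is correct and is essentially the paper's own: the proposition is deduced exactly as you do, from the Oda--Seshadri description $I=n_{I*}L_I$, the equivalence of simplicity with connectedness of $X_I$ (hence $J\subsetneq S$ for a binary curve), the identification $I_{Z_i}=L_I|_{Z_i}$ turning $P$-quasistability of $I$ into $P$-quasistability of the multi-degree of $L_I$, and the identification of (twisted) global sections of $I$ with those of $L_I$. You have merely written out the Euler-characteristic bookkeeping and the inverse construction that the paper leaves implicit.
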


\section{The proof of the main theorem.}
\label{sec: main}
In this section we restate Theorem \ref{thm: introduction} and give a proof. Our proof proceeds by induction on genus. For the inductive step we reduce the genus by imposing an extra ramification condition. For convenience we specify the following notation:
\begin{nota}\label{nota: linear series}
Let $X$ be a binary curve of genus $g$ with components $Z_1$ and $Z_2$. Let $D_\bullet$ (resp. $E_\bullet$) be an increasing sequence of effective divisors supported on $\{P_1, P_2\}$ (resp $\{P'_1, P'_2\}$), where $P_i$ and $P'_i$ are non-singular points of $X$ in $Z_i$ and $P_i\neq P'_i$ for $1\leq i\leq 2$. Let $a_\bullet$ (resp. $a'_\bullet$) be an admissible sequence along $D_\bullet$ (resp. $E_\bullet$). Let $\underline d$ be a multi-degree over $(Z_1,Z_2)$. Recall that $G^r_{\underline d}(X)$ (resp. $G^r_{\underline d}(X;D_\bullet,a_\bullet)$) denotes the space of $\mathfrak g^r_{\underline d}$s on $X$ (resp. $\mathfrak g^r_{\underline d}$s on $X$ with multi-vanishing sequence at least $a_\bullet$ along $D_\bullet$). Let $G^r_{\underline d}(X;D_\bullet,a_\bullet)^\circ$ be the subspace of $G^r_{\underline d}(X;D_\bullet,a_\bullet)$ parametrizing pairs $(L,V)$ such that $\dim h^0(X,L)=r+1$. Let $$\partial G^r_{\underline d}(X;D_\bullet,a_\bullet)=G^r_{\underline d}(X;D_\bullet,a_\bullet)\backslash G^r_{\underline d}(X;D_\bullet,a_\bullet)^\circ.$$
Let also $G^r_{\underline d}(X;D_\bullet,a_\bullet)^{\circ\circ}$ be the subspace of $G^r_{\underline d}(X;D_\bullet,a_\bullet)^\circ$ parametrizing $\mathfrak g^r_{\underline d}$s whose multi-vanishing sequence along $D_\bullet$ is exactly $a_\bullet$.
Moreover, denote by 
$$G^r_{\underline d}(X;D_\bullet,a_\bullet;D'_\bullet,a'_\bullet)=G^r_{\underline d}(X;D_\bullet,a_\bullet)\times_{G^r_{\underline d}(X)}G^r_{\underline d}(X;D'_\bullet,a'_\bullet)$$
the space of $\mathfrak g^r_d$s on $X$ with multi-vanishing sequence at least $a_\bullet$ along $D_\bullet$ and $a'_\bullet$ along $D'_\bullet$.
\end{nota}

\begin{lem}\label{lem: d<g+r}
Let $(d_1,d_2)$ be a multi-degree over $(Z_1,Z_2)$ such that either of the following is true:

1) $(d_1,d_2)$ is $g$-balanced as in Example \ref{ex: polarization stableness};

2) $d_i\geq -1$ for each $i$;

3) $d_i\leq g+r$ for each $i$.

Suppose $d<g+r$ and $G^r_{\underline d}(X)$ is nonempty. Then $r\leq d_i\leq d-r$ for each $i$.

\end{lem}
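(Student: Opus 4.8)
The plan is to show that a $\mathfrak{g}^r_{\underline d}$ on $X$ forces strong constraints on the two factors $d_1,d_2$, by restricting a global section to each component $Z_i\cong\mathbb P^1$ and using that $\mathbb P^1$ carries no interesting linear series. First I would recall the normalization sequence relating $H^0(X,L)$ to $H^0(Z_1,L|_{Z_1})$ and $H^0(Z_2,L|_{Z_2})$ along the $g+1$ gluing points; since $h^0(\mathbb P^1,\mathcal O(e)) = e+1$ for $e\ge 0$ and $0$ for $e<0$, one gets the elementary bound $h^0(X,L)\le \max(d_1+1,0)+\max(d_2+1,0) - (\text{gluing corrections})$, and in particular the crude bound $h^0(X,L)\le d_1+d_2+2-g \cdots$ must be compared against the requirement $h^0(X,L)\ge r+1$. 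Actually the cleaner route: if $d_i<r$ for some $i$, I want to derive a contradiction with $h^0(X,L)\ge r+1$.

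The key step is the following dichotomy. Suppose, say, $d_1\le r-1$. A section $s\in H^0(X,L)$ restricts to $s|_{Z_1}\in H^0(Z_1,L|_{Z_1})$, a space of dimension $\max(d_1+1,0)\le r$. The sections vanishing on all of $Z_1$ are exactly $H^0(Z_2, L|_{Z_2}(-\sum Q^2_j)) = H^0(\mathbb P^1,\mathcal O(d_2-g-1))$, of dimension $\max(d_2-g,0)$. Hence
$$r+1\le h^0(X,L)\le \max(d_1+1,0) + \max(d_2-g,0).$$
If $d_2\le g$ this gives $r+1\le d_1+1\le r$, a contradiction; so we may assume $d_2\ge g+1$, whence $d=d_1+d_2\ge g+1 > g+r$ is impossible under hypothesis 3), and under hypothesis 1) balancedness forces $d_1\ge d_2-(g+1)\ge 0$ but more to the point $d_2\le d_1+g+1$, and combined with $d<g+r$... here is where I have to be careful and play the three hypotheses against the inequality $d_2\ge g+1$. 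Under 1), $d<g+r$ and $d_2\ge g+1$ give $d_1<r-1$, consistent, so I need the sharper count: restricting to $Z_2$, the image of $H^0(X,L)$ in $H^0(Z_2,L|_{Z_2})$ has dimension $\le r$ only if the $g+1$ points $Q^1_j$ impose enough conditions — and genericity of $X$ (the gluing points are general) is exactly what forces $d_1+1$ general points on $Z_2\cong\mathbb P^1$... wait, it is the $Z_1$-side points. Let me instead use: the sections of $L$ restricted to $Z_2$ that extend must agree at the $g+1$ nodes with a section on $Z_1$ of degree $d_1\le r-1$, i.e. lie in a subspace cut out by $g+1-(d_1+1) = g-d_1$ general linear conditions on $H^0(\mathbb P^1,\mathcal O(d_2))$ (genericity of nodes). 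So $h^0(X,L)\le (d_1+1) + \big((d_2+1)-(g-d_1)\big)^+ \le (d_1+1) + (d - g + d_1 + 1 - ?)$; imposing $\ge r+1$ and using $d<g+r$ yields $d_1\ge$ something contradicting $d_1\le r-1$. The symmetric argument gives $d_i\ge r$ for each $i$, and then $d_i = d - d_{3-i}\le d-r$ follows immediately.

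I expect the main obstacle to be bookkeeping the three separate hypotheses 1), 2), 3) uniformly: the naive section count leaves a gap exactly when one $d_i$ is large and negative-ish cancellation occurs, and closing it requires invoking generality of $X$ (the $g+1$ pairs of gluing points are in general position) so that those points impose independent conditions on $H^0(\mathbb P^1,\mathcal O(e))$ whenever $e\ge 0$ — this is where hypotheses like $d_i\ge -1$ or $d_i\le g+r$ get used to guarantee the relevant degrees are in the range where ``general points impose independent conditions'' is both meaningful and available. Once that independence is in hand, the inequality $r+1\le h^0(X,L)$ together with $d_1+d_2 = d < g+r$ pins down $r\le d_i$, and the upper bound is automatic. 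I would organize the writeup as: (i) the normalization exact sequence and the two restriction maps; (ii) the case $d_i\ge 0$ for both, where generality of nodes gives the clean count; (iii) the boundary cases $d_i\in\{-1\}$ handled via hypothesis 2) (a section vanishing identically on the negative component), reducing to a binary curve of smaller genus or to $\mathbb P^1$ directly; (iv) assembling the contradiction and deducing $r\le d_i\le d-r$.
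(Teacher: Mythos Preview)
Your initial inequality
\[
r+1 \;\le\; h^0(X,L) \;\le\; \max(d_1+1,0) + \max(d_2-g,0)
\]
is correct and is already enough to finish; the place where you ``have to be careful'' closes by elementary arithmetic, and the subsequent appeal to genericity of the gluing points is both unnecessary and unavailable. The lemma is stated for an arbitrary binary curve $X$, and indeed the paper's proof never uses general position of the nodes. So steps (ii)--(iv) of your plan, which hinge on ``general points impose independent conditions,'' should be dropped.

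Here is how the case $d_1\le r-1$, $d_2\ge g+1$ closes directly from your own bound. If $d_1\ge -1$, the right-hand side is $d_1+1+d_2-g=d-g+1<r+1$, contradiction. If $d_1\le -2$, then hypothesis~2) is violated outright; hypothesis~1) gives $d_2\le d_1+g+1\le g-1$, contradicting $d_2\ge g+1$; and hypothesis~3) gives $d_2-g\le r$, so the right-hand side is $\le r$, again a contradiction. Thus $d_1\ge r$ in every case, and $d_1\le d-r$ follows from $d_2\ge r$ by symmetry.

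The paper organizes the same content slightly differently, proving the two inequalities in the opposite order. It first shows $d_i\le g$ for each $i$: if $d_1\ge g$, the restriction $H^0(X,L)\to H^0(Z_2,L|_{Z_2})$ is \emph{surjective} with kernel of dimension $d_1-g$, so $h^0(X,L)=d_1-g+\max(d_2+1,0)$, and each of the three hypotheses combined with $d<g+r$ forces this to be $\le r$. Once $d_i\le g$ for both $i$, restriction to either component is injective, whence $r+1\le h^0(X,L)\le d_{3-i}+1$, giving $d_{3-i}\ge r$ and $d_i=d-d_{3-i}\le d-r$. This ordering avoids the nested case analysis, but the underlying estimate is the same as yours.
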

\begin{proof}
Take $(L,V)\in G^r_{\underline d}(X)$. If $d_1\geq g$, then the restriction map $$\phi\colon H^0(X,L)\rightarrow H^0(Z_2,L|_{Z_2})$$ is surjective with kernel of dimension $d_1-g$. As a result $$h^0(X,L)=d_1-g+\max\{d_2+1,0\}\leq r,$$
which is impossible. Hence we conclude $d_1\leq g$, and similarly $d_2\leq g$. It follows that $\phi$ is injective. Thus $r+1\leq h^0(Z_2,L|_{Z_2})=d_2+1$. As a result $d_2\geq r$ and $d_1=d-d_2\leq d-r$, and vice versa.
\end{proof}

Let $X'$ be the normalization of $X$ at a node $Q$, whose preimage in $Z_i$ is $Q_i$, as in Figure \ref{fig: normalization at one point}. Let $E_0=0$ and $E_1=Q_1+Q_2$ be divisors on $X'$. Let $b_0=0$ and $b_j=2$ for all $1\leq j\leq r$. Denote $G^r_{\underline d}(X';D_\bullet,a_\bullet;E_\bullet,b_\bullet)$ by $G_{E_0}$.
By pulling back a $\mathfrak g^r_d$ on $X$ to $X'$ we obtain a natural map

$$\varphi\colon G^r_{\underline d}(X;D_\bullet,a_\bullet)\rightarrow G_{E_0}.$$

\begin{figure}[H]
\begin{tikzpicture}[scale=0.5]
    
    \draw[thick] plot [smooth,tension=1] coordinates{(1,5)(0.5,4)(-0.5,2)(0.5,0)(-0.5,-2)(0.5,-4)(1,-5)};
    \draw[blue,thick] plot [smooth,tension=1] coordinates{(-1,5)(-0.5,4)(0.5,2)(-0.5,0)(0.5,-2)(-0.5,-4)(-1,-5)};
    
    \draw[shift={(0,1)}][thick] plot [smooth,tension=1] coordinates{(12,4)(11.5,3)(10.5,1)(11.5,-1)(10.5,-3)(10,-4)(9,-6)};
    \draw[shift={(0,1)}][blue,thick] plot [smooth,tension=1] coordinates{(10,4)(10.5,3)(11.5,1)(10.5,-1)(11.5,-3)(12,-4)(13,-6)};
     \draw (-1.5,4.5) node[circle, fill=black, scale=0, label=above:{$X$}]{};
     \draw (-0.75,-4.45) node[circle, fill=black, scale=0.3, label=left:{$P_1$}]{};
      \draw (0.85,-4.7) node[circle, fill=black, scale=0.3, label=right:{$P_2$}]{};
       \draw (0,-3.3) node[circle, fill=black, scale=0.3, label=right:{$Q$}]{};
       
        \draw (10,5) node[circle, fill=black, scale=0, label=left:{$X'$}]{};
        \draw (12.75,-4.45) node[circle, fill=black, scale=0.3, label=right:{$P_1$}]{};
        \draw (12.15,-3.3) node[circle, fill=black, scale=0.3, label=right:{$Q_1$}]{};
        \draw (9.15,-4.7) node[circle, fill=black, scale=0.3, label=right:{$P_2$}]{};
        \draw (9.85,-3.3) node[circle, fill=black, scale=0.3, label=right:{$Q_2$}]{};
\end{tikzpicture}
\caption{}\label{fig: normalization at one point}
\end{figure}

Let $G_{E_1}$ be the subspace of $G_{E_0}$ consisting of $\mathfrak g^r_{\underline d}$s with base points $Q_1$ and $Q_2$. We have:
\begin{lem}\label{lemma: induction step}
The map $\varphi$ above is injective over $G_{E_0}\backslash G_{E_1}$ and has fiber of dimension at most one over $G_{E_1}$.

\end{lem}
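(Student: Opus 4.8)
The plan is to analyze the map $\varphi$ fiber-by-fiber in terms of how a $\mathfrak g^r_{\underline d}$ on $X$ is recovered from its pullback to $X'$. A line bundle $L$ on $X$ of multi-degree $\underline d$ pulls back to the line bundle $L'$ on $X'$ of the same multi-degree, and the descent data gluing $L'$ back to $L$ is the choice of an isomorphism between the fibers $L'(Q_1)$ and $L'(Q_2)$, i.e. a point of $\mathbb{G}_m$ (well-defined up to scaling, which is absorbed by the equivalence relation on line bundles). So the fiber of $\mathrm{Pic}^{\underline d}(X)\to\mathrm{Pic}^{\underline d}(X')$ is a single point, and the only subtlety is tracking what happens to the space of sections $V$.

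First I would treat the locus $G_{E_0}\setminus G_{E_1}$, where I claim $\varphi$ is injective. Given $(L,V)\in G^r_{\underline d}(X;D_\bullet,a_\bullet)$, its image is $(L',V')$ where $V'$ is the image of $V$ under the pullback $H^0(X,L)\hookrightarrow H^0(X',L')$ (this is injective since $X'\to X$ is surjective and $L'$ is the pullback). The key point is that $H^0(X,L)$ is exactly the subspace of $H^0(X',L')$ consisting of sections whose values at $Q_1$ and $Q_2$ \emph{agree under the chosen gluing isomorphism}; equivalently, it is cut out inside $H^0(X',L')$ by one linear condition relating the evaluations at $Q_1$ and $Q_2$. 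Now, being in $G_{E_0}\setminus G_{E_1}$ means the $\mathfrak g^r_{\underline d}$ on $X'$ has multi-vanishing sequence at least $b_\bullet=(0,2,2,\dots,2)$ along $E_\bullet=(0,Q_1+Q_2)$ but does \emph{not} have both $Q_1$ and $Q_2$ as base points of $V'$ — i.e. $\dim V'(-Q_1-Q_2)\geq r$ but $V'$ is not entirely contained in $H^0(X',L'(-Q_1-Q_2))$. I would argue that under this hypothesis, the evaluation map $V'\to L'(Q_1)\oplus L'(Q_2)$ (a map to a $2$-dimensional space) has rank exactly $1$: it has rank $\le 1$ because the common kernel $V'(-Q_1-Q_2)$ has codimension $\le 1$ in $V'$, and rank $\ge 1$ since not both evaluations vanish. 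Hence the hyperplane $V\subset V'$ obtained by imposing the gluing condition at $Q$ is \emph{determined}: it is the unique hyperplane containing $V'(-Q_1-Q_2)$ on which the two (proportional) evaluation functionals balance according to the gluing scalar. Since the gluing scalar is itself recoverable from the requirement that $(L,V)$ actually descends (and has the right dimension $r+1$), I would conclude $(L',V')$ determines $(L,V)$, giving injectivity.

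Next, over $G_{E_1}$, where $V'\subset H^0(X',L'(-Q_1-Q_2))$, the evaluation map $V'\to L'(Q_1)\oplus L'(Q_2)$ is zero, so the gluing condition at $Q$ is \emph{automatically satisfied} by every hyperplane of $V'$ and every gluing scalar. Thus to recover $(L,V)$ I must choose: (a) a gluing isomorphism $L'(Q_1)\xrightarrow{\sim} L'(Q_2)$, which is one parameter but gets rigidified — or rather, all such choices give isomorphic $L$ since $Q_1,Q_2$ are distinct points on the two $\mathbb{P}^1$'s (any two gluings differ by an automorphism of $X'$... more carefully: the gluing scalar genuinely gives a $\mathbb{G}_m$'s worth of $L$, but for a \emph{fixed} descended $L$ the choices form a torsor that collapses) — and (b) a hyperplane $V\subset V'$ lifting to $r+1$ sections on $X$. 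I would reorganize this as follows: the fiber of $\varphi$ over a point of $G_{E_1}$ corresponds to pairs $(L,V)$ with $L$ a fixed descent of $L'$ (there is essentially a one-dimensional family of such $L$, parametrized by the gluing, but one should check whether these are all isomorphic — since $Q$ is a node and $X'$ disconnected-at-$Q$, different gluings do give non-isomorphic $L$, contributing the one dimension) and $V = \mathrm{pullback}^{-1}$ of an $(r+1)$-dimensional subspace; for fixed $L$ the space $H^0(X,L)$ is fixed and $V$ ranges over... Actually the cleaner statement: the fiber is contained in the fiber of $\mathrm{Pic}^{\underline d}(X)\times\{\text{Grassmannian choices}\}$, and I would show directly that it is at most one-dimensional by exhibiting it as (a subscheme of) a $\mathbb{P}^1$ or $\mathbb{A}^1$ coming from the gluing parameter, with $V$ then determined by $V'$ together with $L$. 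The main obstacle I anticipate is precisely this bookkeeping in the $G_{E_1}$ case: making rigorous that once we are forced into base points at $Q_1$ and $Q_2$, the only remaining ambiguity is the one-dimensional gluing parameter and that $V$ is pinned down by it (so the fiber is $\le 1$-dimensional, not $2$-dimensional), since a priori one might fear extra freedom in choosing which hyperplane of $V'$ descends. Resolving it amounts to the observation that $H^0(X,L)\subset H^0(X',L')$ for the descended $L$ already contains (the pullback of) a fixed codimension-$\le 1$ subspace, forcing $V$, so all the moduli in the fiber comes from varying $L$, i.e.\ from the node's gluing $\mathbb{G}_m$.
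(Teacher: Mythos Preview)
Your overall strategy matches the paper's: use the diagram over $\mathrm{Pic}(X)\to\mathrm{Pic}(X')$, observe that the fiber of the Picard map is the $\mathbb G_m$ of gluing scalars, and then argue that on $G_{E_0}\setminus G_{E_1}$ a nonvanishing section pins down the gluing. But there is a genuine confusion in your execution that makes the write-up incorrect as stated and also causes the unnecessary hand-wringing in the $G_{E_1}$ case.

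The confusion is dimensional: you repeatedly speak of ``the hyperplane $V\subset V'$,'' and in the $G_{E_1}$ case you worry about ``which hyperplane of $V'$ descends.'' But $\dim V=\dim V'=r+1$; the pullback $H^0(X,L)\hookrightarrow H^0(X',L')$ is injective, so $V$ maps \emph{isomorphically} onto $V'$. There is no hyperplane to choose. The correct statement is: for a fixed $L$ with $n_Q^*L=L'$, there is at most one $V\subset H^0(X,L)$ of dimension $r+1$ mapping to $V'$, namely $V'$ itself if $V'\subset H^0(X,L)$ (viewed inside $H^0(X',L')$), and nothing otherwise. So the entire fiber question reduces to: for how many gluing scalars does $V'\subset H^0(X,L)$ hold?

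With this correction the argument collapses to two lines, exactly as the paper does it. If $(L',V')\notin G_{E_1}$ then some $s\in V'$ has $s(Q_1)\neq 0$ or $s(Q_2)\neq 0$; either no gluing makes $s$ descend (fiber empty) or the gluing is forced by $s$ (fiber a single point). If $(L',V')\in G_{E_1}$ then every section of $V'$ vanishes at both $Q_i$, so $V'\subset H^0(X,L)$ for \emph{every} gluing, and the fiber is at most the one-dimensional $\mathbb G_m$. Your rank-$1$ analysis of the evaluation map is correct and a fine way to phrase the first case, but the conclusion you should draw from it is that the $1$-dimensional image determines the gluing graph in $L'(Q_1)\oplus L'(Q_2)$, not that it cuts out a hyperplane in $V'$.
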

\begin{proof}
Let $n_Q$ be the normalization map. We have a commutative diagram
$$\begin{tikzcd}  G^r_{\underline d}(X;D_\bullet,a_\bullet)\rar{\varphi}\dar & G_{E_0}\dar\\ \mathrm{Pic}(X)\rar{n_Q^*}&\mathrm{Pic}(X').
\end{tikzcd}
$$
Let $(L',V')$ be a $\mathfrak g^r_d$ on $X'$ and $L$ be a line bundle on $X$ such that $n_Q^*(L)=L'$. Since the induced map $H^0(X,L)\rightarrow H^0(X',L)$ is injective, there is at most one $(r+1)$-dimensional subspace $V$ of $H^0(X,L)$ that maps to $V'$. Therefore the fiber dimension of $\varphi$ at $G_{E_1}$ is at most one, since the fiber of $n_Q^*$ has dimension one. If $(L',V')\not\in G_{E_1}$, then we can take a section $s\in V'$ such that either $s(Q_1)\neq 0$ or $s(Q_2)\neq 0$. Then either there is no section of $L$ that restricts to $s$ or the glueing data of $L$ at $Q$ is determined by $s$. Hence the fiber of $\varphi$ at $(L',V')$ consists of at most one point.
\end{proof}

We now restate and prove the main theorem.
\begin{thm}\label{thm: main theorem}
Let $X$ be a general genus $g$ binary curve with components $Z_1$ and $Z_2$. Let $P_i\in Z_i$ be two general points. Let $\underline d=(d_1,d_2)$ be a multi-degree where $-1\leq d_i$ for each $i$ and $d_1+d_2=d$. Let $r\geq 0$ be an integer. Let $a_\bullet$ be an admissible sequence along an increasing sequence $D_\bullet$ of effective divisors supported on $\{P_1, P_2\}$. Suppose either (i) $ d_i\leq g$ for each $i$ or (ii) $a_\bullet$ is $\underline d$-bounded. Then the space $G^r_{\underline d}(X;D_\bullet,a_\bullet)$ of linear series of multi-degree $\underline d$ and rank $r$ which has multi-vanishing sequence at least $a_\bullet$ along $D_\bullet$ is either empty or of dimension 
$$\rho=g-(r+1)(g-d+r)-\bigg(\sum_{j=0}^{r}(a_j-j)+\sum_{l\geq 0}\binom{r_l}{2}\bigg).$$Here $r_l$ is the number of repetitions of $\deg D_l$ in $a_\bullet$. In particular, $G^r_{\underline d}(X)$ is either empty or of dimension $\rho_{g,r,d}$.
\end{thm}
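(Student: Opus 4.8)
The plan is to prove Theorem~\ref{thm: main theorem} by induction on the genus $g$, using the normalization map $\varphi\colon G^r_{\underline d}(X;D_\bullet,a_\bullet)\to G_{E_0}$ at a node $Q$ as the inductive mechanism, together with a dimension count that tracks the ramification data through the degeneration of chords in the spirit of \cite{griffiths1980variety}. The base case is $g$ small (e.g. $g=0$, so $X=Z_1\cup Z_2$ is a single node, or the case where the relevant Brill--Noether number makes the statement vacuous or elementary), which can be checked directly since on $\mathbb P^1$ all linear series are classified. For the inductive step, I would use Lemma~\ref{lemma: induction step}: since $\varphi$ is injective away from $G_{E_1}$ and has fibers of dimension at most one over $G_{E_1}$, it suffices to bound $\dim G_{E_0}$ and $\dim G_{E_1}$ separately. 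The space $G_{E_0}=G^r_{\underline d}(X';D_\bullet,a_\bullet;E_\bullet,b_\bullet)$ lives on the genus-$(g-1)$ binary curve $X'$, but now with an \emph{extra} ramification condition $(E_\bullet,b_\bullet)$ at the new pair of points $(Q_1,Q_2)$ coming from smoothing the node. The crux is that this is exactly the kind of object the theorem is about — a space of linear series on a lower-genus binary curve with a prescribed multi-vanishing condition supported at two general points — so the inductive hypothesis applies, provided $(Q_1, Q_2)$ can be taken to be general points of $Z_1, Z_2$ and the combined ramification data $(D_\bullet, E_\bullet)$ still satisfies the admissibility/$\underline d$-boundedness hypotheses.

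The key technical steps, in order, are: (1) reduce via Remark~\ref{rem:multi-vanishing} to the normalized form where $D_\bullet=(D_0,\dots,D_r)$ with $a_j=\deg D_j$, so the dimension formula $\rho$ is clean; (2) identify $G_{E_0}$ with a space $G^r_{\underline d}(X';\widetilde D_\bullet,\widetilde a_\bullet)$ on $X'$ where $\widetilde D_\bullet$ is the "merged" increasing sequence built from $D_\bullet$ and $E_\bullet=(0,Q_1+Q_2)$, checking that $\widetilde a_\bullet$ remains admissible and, in case (ii), $\underline d$-bounded (here one uses $d_i\geq -1$ and the bound $D_{l_r}\le(d_1+1)P_1+(d_2+1)P_2$); (3) apply the inductive hypothesis to get $\dim G^r_{\underline d}(X') \le \rho_{g-1,r,d} - (\text{ramification defect})$ — note the genus drops by $1$ while $d$ is unchanged, so $\rho_{g-1,r,d}=\rho_{g,r,d}+(g-d+r)$, and the extra ramification condition $b_\bullet=(0,2,2,\dots,2)$ at $(Q_1,Q_2)$ contributes a defect of roughly $\sum_{j=1}^r(2-j)$-type terms plus the repetition binomials, which must be bookkept to match; (4) split off the locus $G_{E_1}\subset G_{E_0}$ of linear series with $Q_1,Q_2$ as base points, observe that such a linear series is pulled back from a $\mathfrak g^r_{(d_1-1,d_2-1)}$ on the genus-$(g-1)$ curve $Z_1\cup Z_2$ obtained by further contracting, and apply the inductive hypothesis there too, gaining back the $+1$ from the fiber dimension; (5) combine the two bounds to conclude $\dim G^r_{\underline d}(X;D_\bullet,a_\bullet)\le\rho$. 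The reverse inequality (that the dimension is $\ge\rho$ when nonempty) follows from the determinantal/Schubert description in Proposition~\ref{prop:degeneration} and the remarks after it: any nonempty determinantal locus has dimension at least the expected one.

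The bound on $G_{E_1}$ needs care: one must verify that imposing $Q_1, Q_2$ as base points genuinely drops the genus and the multi-degree in a way compatible with the hypotheses (e.g. that $(d_1-1,d_2-1)$ still has entries $\ge -1$, which can fail if some $d_i=-1$; in that degenerate case the component $Z_i$ carries a line bundle of degree $-1$ with no sections, and one argues separately that $\varphi$ is injective there, or that the locus is empty). One also needs Lemma~\ref{lem: d<g+r} to control the range of $\underline d$ in the low-degree regime $d<g+r$, ensuring $r\le d_i\le d-r$ so that the restrictions to $Z_1, Z_2$ behave as expected.

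The main obstacle I anticipate is step~(3): making the accounting of the ramification defect exact. When the node $Q$ is smoothed, the condition "pulls back from $X$" is precisely the condition $b_\bullet=(0,2,\dots,2)$ at $(Q_1,Q_2)$ — a single value $2$ repeated $r$ times at degree-$2$ divisor $E_1$ — and the contribution of this to the expected-dimension formula is $\sum_{j=1}^{r}(\deg E_{l_j}-j)+\binom{r}{2}$-type terms; one must check this equals exactly $g-d+r$ (the difference $\rho_{g,r,d}-\rho_{g-1,r,d}$ is $-(g-d+r)$, so the defect from the new condition plus the fiber dimension must net out to $0$). This is a finite but delicate combinatorial identity, complicated further by the need to interleave $E_\bullet$ with $D_\bullet$ when the supports $\{P_1,P_2\}$ and $\{P'_1,P'_2\}=\{Q_1,Q_2\}$ differ — here I would invoke the independence of ramification conditions at distinct general points, which is where the genericity of $X$ and of the $P_i$ is really used, and where the Schubert-cycle transversality argument of \cite{griffiths1980variety}, adapted via Esteves' compactification as indicated in the introduction, does the work.
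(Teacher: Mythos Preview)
Your high-level architecture matches the paper: induction on $g$, Lemma~\ref{lemma: induction step} to reduce to bounding $\dim G_{E_0}$ and $\dim G_{E_1}$, and the lower bound from the determinantal/Schubert description. The treatment of $G_{E_1}$ is essentially right (the paper in fact only removes \emph{one} base point, embedding $G_{E_1}\subset G^r_{(d_1-1,d_2)}(X';D_\bullet,a_\bullet)$, which sidesteps your worry about $d_i-1\geq -1$).

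The genuine gap is your step (2)--(3) for $G_{E_0}$. The inductive hypothesis is a statement about linear series with multi-vanishing conditions supported on a \emph{single} pair $\{P_1,P_2\}$, one point per component. But $G_{E_0}$ carries conditions at \emph{four} points $\{P_1,P_2,Q_1,Q_2\}$, so there is no ``merged sequence $\widetilde D_\bullet$'' to which the theorem as stated applies; you cannot simply invoke induction on $X'$. Your final paragraph gestures at this (``independence of ramification conditions at distinct general points'') but that independence is precisely what has to be proved, and it is the entire technical content of the argument. The paper's resolution is to \emph{degenerate} $Q_1\to P_1$ and then $Q_2\to P_2$ inside the proper family $\overline G^r_d(\mathfrak X'/\mathbb A^1_k,\mathcal E,\sigma)$ over Esteves' compactified Jacobian; the limit of $G_{E_0}$ is then contained in a finite union of spaces $\overline G^r_d(X',\mathscr E,P;D'_\bullet,a'_\bullet)$ with $a'_j=a_j+1$, now supported only on $\{P_1,P_2\}$. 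Extracting this containment requires a delicate case analysis (indices $m,n,u,h$ tracking where the extra vanishing order lands), followed by the stratification of Proposition~\ref{prop: stratification of linear series} over partial normalizations $X'_J$ so that induction applies to each stratum. You also omit the case split by the sign of $d-(g+r)$: for $d\leq g+r-2$ the argument runs as above with the balanced polarization; for $d=g+r-1$ a different polarization $\mathscr E_y$ is needed to keep the strata $\underline d_J$-bounded; and for $d\geq g+r$ the argument changes character entirely, stratifying by $h^0(X,L)$ and reducing to lower $d$ via the map $(L,V)\mapsto(L(-D_\lambda),V(-D_\lambda))$.
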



\begin{proof} According to Remark \ref{rem:multi-vanishing} (2) we may assume $D_\bullet=(D_0,...,D_r)$, where $D_j=a^1_jP_1+a^2_jP_2$,  and $a_j=a^1_j+a^2_j=\deg D_j$ for each $0\leq j\leq r$. Hence $a^i_{j+1}\geq a^i_j\geq 0$ and $a_{j+1}>a_j$ for each $i$ and $j$. Moreover, we have either (i) $d_i\leq g$ for each $i$ or (ii) $a^i_r\leq d_i+1$ for each $i$, and the expected dimension $\rho$ becomes
$$\rho=g-(r+1)(g-d+r)-\sum_{j=0}^{r}(a_j-j).$$

Since $G^r_{\underline d}(X;D_\bullet,a_\bullet)$ is a Schubert subscheme of $G^r_{\underline d}(X)$, which is a determinantal subscheme of the relative Grassmannian over $\mathrm{Pic}^{\underline d}(X)$, it has dimension at least $\rho$ if nonempty. Hence it remains to show that $G^r_{\underline d}(X;D_\bullet,a_\bullet)$ has dimension at most $\rho$.

We prove this by induction on $g$. Since when $d_i\leq g$ for each $i$ and $a^1_r>d_1+1$ the space $G^r_{\underline d}(X;D_\bullet,a_\bullet)$ is empty, we may always assume $a^1_r\leq d_1+1$. Similarly assume $a^2_r\leq d_2+1$. For the $g=0$ case there is a unique line bundle $L_{\underline d}$ on $X$ with multi-degree $\underline d$. Since $h^0(X,L_{\underline d})=d+1$, the space  $G^r_{\underline d}(X)$ of linear series on $X$ is identified with the Grassmannian $Gr(r+1,d+1)$, and $G^r_{\underline d}(X;D_\bullet,a_\bullet)$ is a Schubert variety defined by the partial flag $$H^0(X,L_{\underline d}(-D_r))\subset H^0(X,L_{\underline d}(-D_{r-1}))\subset\cdots\subset H^0(X,L_{\underline d}(-D_0)).$$
Note that $H^0(X,L_{\underline d}(-D_j))$ has dimension exactly $d-a_j+1$. Therefore it is easy to verify that $G^r_{\underline d}(X;D_\bullet,a_\bullet)$ has dimension as expected.

Suppose the conclusion is true for $g-1$. 
Let $X'$, $Q$, $Q_i$ and $G_{E_j}$ be as above. By Lemma \ref{lemma: induction step}, it is enough to show that $G_{E_0}$ has dimension (at most) $\rho$, and $G_{E_1}$ has dimension at most $\rho-1$. If $a^i_r=d_i+1$ for each $i$ then $G^r_{\underline d}(X';D_\bullet,a_\bullet)$ is empty, so is $G_{E_1}$. If $a^1_r\leq d_1$, then $G^r_{( d_1-1,d_2)}(X';D_\bullet,a_\bullet)$ has dimension at most $\rho-1$ by induction. Since it contains $G_{E_1}$, the dimension of $G_{E_1}$ is at most $\rho-1$, and similarly for the case $a^2_r\leq d_2$. We next show that $G_{E_0}$ has dimension at most $\rho$.

Consider the trivial family $\pi\colon \mathfrak X'=X'\times \mathbb A_k^1\rightarrow \mathbb A_k^1$. Denote by $\widetilde \pi\colon \mathfrak X' \rightarrow X'$ the other projection. Denote by $\tilde g=g-1$.

1) Assume $d\leq g+r-2=\tilde g+r-1$. By Lemma \ref{lem: d<g+r} we may assume $r\leq d_i\leq \tilde g-1$ for $1\leq i\leq 2$. Take $\mathcal E=\widetilde \pi^*(\mathscr E)$, where $\mathscr E =E_{d,\tilde g,0}(X')$ is defined in Example \ref{ex: polarization stableness}. Let $\sigma=P\times\mathbb A^1_k$ where $P\in Z_1$. It is easy to verify that $G_{E_0}$ is contained in $\overline G^r_d(X',\mathscr E,P)$.

Consider the degeneration $Q_1\rightarrow P_1$ as $z\rightarrow 0$. Let $G$ be the limit of $G_{E_0}$ in $\overline G^r_d(\mathfrak X'/\mathbb A^1_k,\mathcal E,\sigma)$ at $z=0$. Take $(I_0,V_0)\in G$. For each $0\leq l\leq r$ we have 
$$\dim V(-a^1_lP_1-a^2_lP_2-Q_1-Q_2)\geq r-l\mathrm{\ for\ all\ } (I,V)\in G_{E_0}.$$ By Proposition \ref{prop:degeneration} (3) (restrict to an open subvariety of $\mathbb A^1_k$ if necessary to make sure the limit $Q_1\rightarrow P_1$ misses the singular points of each fiber of $\mathfrak X'$) , it follows that
$$\dim V_0(-(a^1_{l}+1)P_1-a^2_{l}P_2-Q_2)\geq r-l$$ for each $l$.

Let $m\in\{-1,0,1,...,r\}$ be the unique number such that $$\dim V_0(-(a^1_l+1)P_1-a^2_lP_2)\geq r-l+1$$ for all $0\leq l\leq m$ and  $$\dim V_0(-(a^1_{l}+1)P_1-a^2_{l}P_2)=r-l.$$ 
for $l=m+1$ (if $m=r$ then this condition is waived).

If $m=r$, then 
$$\dim V_0(-(a^1_l+1)P_1-a^2_lP_2)\geq r-l+1$$
for all $0\leq l\leq r$.

We next assume $m<r$. Then $$\dim V_0(-a^1_{m+1}P_1-a^2_{m+1}P_2)=r-m$$ 
and 
$$ \dim V_0(-(a^1_{m+1}+1)P_1-a^2_{m+1}P_2)=r-m-1=\dim  V_0(-(a^1_{m+1}+1)P_1-a^2_{m+1}P_2-Q_2).$$ We thus have 
$$V_0(-(a^1_{m+1}+1)P_1-a^2_{m+1}P_2)= V_0(-(a^1_{m+1}+1)P_1-a^2_{m+1}P_2-Q_2).$$
Namely all sections in $V_0(-(a^1_{m+1}+1)P_1-a^2_{m+1}P_2)$ vanish at $Q_2$.



Let $n$ be the smallest number such that $a^1_n\geq a^1_{m+1}+1$. Then for all $l\geq n$ we have $$V_0(-a^1_lP_1-a^2_lP_2)\subset V_0(-(a^1_{m+1}+1)P_1-a^2_{m+1}P_2).$$
Hence all sections of $V_0(-a^1_lP_1-a^2_lP_2)$ vanish at $Q_2$. As a result 
$$\dim V_0(-a^1_lP_1-a^2_lP_2-Q_2)=\dim V_0(-a^1_lP_1-a^2_lP_2)\geq r-l+1.$$

Let $\mathfrak X'_0$ be the fiber of $\mathfrak X'$ at $z=0$. It follows that $G$ is contained in a union over all possible $(m,n)$ of subspaces $G_{m,n}$ of $\overline G^r_d(\mathfrak X'_0,\mathscr E,P)$ consisting of pairs  $(I_0,V_0)$ satisfying the flowing conditions: 

$$
\left\{
\arraycolsep=4pt\def\arraystretch{1.5}
\begin{array}{llll}
\dim V_0(-a^1_lP_1-a^2_lP_2-Q_2)\geq r-l+1,& l\geq n\\
\dim V_0(-(a^1_l+1)P_1-a^2_lP_2)\geq r-l+1,& l\leq m\\
\dim V_0(-a^1_lP_1-a^2_lP_2)\geq r-l+1,&0\leq l\leq r\\
\dim V_0(-(a^1_{l}+1)P_1-a^2_{l}P_2-Q_2)\geq r-l,&0\leq l\leq r.
\end{array}
\right.
$$

Now let $Q_2\rightarrow P_2$, then the conditions above become (again, by Proposition \ref{prop:degeneration} (3))

$$
\left\{
\arraycolsep=4pt\def\arraystretch{1.5}
\begin{array}{llll}
\dim V_0(-a^1_lP_1-(a^2_l+1)P_2)\geq r-l+1,& l\geq n\\
\dim V_0(-(a^1_l+1)P_1-a^2_lP_2)\geq r-l+1,& l\leq m\\
\dim V_0(-a^1_lP_1-a^2_lP_2)\geq r-l+1,&0\leq l\leq r\\
\dim V_0(-(a^1_{l}+1)P_1-(a^2_{l}+1)P_2)\geq r-l,&0\leq l\leq r.
\end{array}
\right.
$$

Let $h$ be the smallest number such that $a^1_h=a^1_{m+1}$ and $u\in\{h-1,h,...,n-1\}$ the unique number such that 
$$\dim V_0(-a^1_lP_1-(a^2_l+1)P_2)\geq r-l+1$$ for all $h\leq l\leq u$ and  $$\dim V_0(-a^1_lP_1-(a^2_l+1)P_2))=r-l=\dim V_0(-(a^1_l+1)P_1-(a^2_l+1)P_2).$$ 
for $l=u+1$. 
Similarly as above, we have 
$$ V_0(-a^1_{u+1}P_1-(a^2_{u+1}+1)P_2))= V_0(-(a^1_{u+1}+1)P_1-(a^2_{u+1}+1)P_2).$$
Now for all $u+1<l\leq n-1$ we have 
$$V_0(-a^1_lP_1-a^2_lP_2)\subset V_0(-a^1_{u+1}P_1-(a^2_{u+1}+1)P_2),$$
thus 
$$V_0(-a^1_lP_1-a^2_lP_2)=V_0(-(a^1_l+1)P_1-a^2_lP_2).$$

It follows that the limit of $G_{m,n}$ in $\overline G^r_d(\mathfrak X'/\mathbb A^1_k,\mathcal E,\sigma)$ as $Q_2\rightarrow P_2$ is contained in the union over all possible $(u,h)$ of
$\overline G^r_{d}( X',\mathscr E,P;D'_\bullet,a'_\bullet)$, where 
$$(a^1_j)'=\left\{ 
\arraycolsep=4pt\def\arraystretch{1.5}
\begin{array}{ll}a^1_j+1,& j< h\mathrm{\ or\ } u+1<j<n\\
a^1_j,& h\leq j\leq u+1\mathrm{\ or\ } j\geq n
\end{array}\right.$$
and
$$
(a^2_j)'=\left\{ 
\arraycolsep=1.6pt\def\arraystretch{1.5}
\begin{array}{ll}a^2_j,& j<h\mathrm{\ or\ } u+1\leq j\leq n-1\\
a^2_j+1,& h\leq j\leq u \mathrm{\ or\ }j\geq n
\end{array}\right.$$
and 
$D'_j=(a^1_j)'P_1+(a^2_j)'P_2$ and $a'_j=\deg D'_j$ (it is possible that $D'_u=D'_{u+1}$, in this case we replace $(a^1_{u+1})' $ with $(a^1_{u+1})'+1 $) for $0\leq j\leq r$. Note that we have $a'_j=a_j+1$ unless $j=u+1$ and $D'_u\neq D'_{u+1}$, in which case $a'_j=a_j$. 

By Proposition \ref{prop: stratification of linear series}, we have 
$$\overline G^r_d(X',\mathscr E,P; D'_\bullet, a'_\bullet)=\bigcup_{J\subsetneq S}\bigcup_{\underline d_J} G^r_{\underline d_J}(X'_J; D'_\bullet, a'_\bullet).$$
where $S$ is the set of nodes of $X'$, and $X'_J$ is the normalization of $X'$ at $J$, and $\underline d_J$ runs over all $P$-quasistable multi-degrees $(d^1_J,d^2_J)$ with respect to $\mathscr E$ such that $d_J:=d^1_J+d^2_J=d-|J|$. Note that $X'_J$ has genus $g_J=\tilde g-|J|$. Straightforward calculation shows that $(d^1_J,d^2_J)$ is $g_J$-balanced. Hence by Lemma \ref{lem: d<g+r} either $G^r_{\underline d_J}(X'_J; D'_\bullet, a'_\bullet)$ is empty or $r\leq d^i_J\leq g_J$ for $1\leq i\leq 2$. By induction 
$G^r_{\underline d_J}(X'_J; D'_\bullet, a'_\bullet)$ has dimension at most $$ \rho_J:=g_J-(r+1)(g_J-d_J+r)-\sum_{j=0}^{r}(a'_j-j)\leq \rho.$$ Hence $G_{m,n}$ has dimension at most $\rho$, and so do $G$ and $G_{E_0}$.

2) Let $d=g+r-1=\tilde g+r$. In this case we can not apply Lemma \ref{lem: d<g+r} for $X'$ and $\tilde g$ (or for any partial normalization $X'_J$ of $X'$ and $g_J$), as in part 1). Nevertheless, we can still apply this lemma for $X$ and $g$ and assume $r\leq d_i\leq \tilde g$ for $1\leq i\leq 2$.  We may assume $a^i_r\leq d_i$ for each $i$, since otherwise $G^r_{\underline d}(X';D_\bullet,a_\bullet)$ is empty. Without loss of generality, suppose $a^1_r\geq a^2_r$, and let $y=a^1_r-a^2_r\geq 0$. Take a line bundle $\mathscr E_y=E_{d,\tilde g,y}(X')$ on $X'$ as in Example \ref{ex: polarization stableness} and let $\mathcal E_y=\widetilde \pi^*(\mathscr E_y)$. As in part 1) let $\sigma=P\times \mathbb A^1_k$ where $P\in Z_1$. We first verify that $G^r_{\underline d}(X')$ is contained in $\overline G^r_d(X',\mathscr E_y,P)$.
Indeed, we have

$$d_1+1\geq a^1_r+1=\frac{1}{2}(a_r+y)+1\geq \frac{r+y}{2}+1>\frac{d-\tilde g+y+1}{2}$$
and $$d_2+1=d-d_1+1\geq d-\tilde g+1\geq \frac{d-\tilde g-y}{2}+1.$$
Hence $G^r_{\underline d}(X')\subset\overline G^r_d(X',\mathscr E_y,P)$.


Now let $Q_1\rightarrow P_1$ and $Q_2\rightarrow P_2$. By the argument in part 1), the limit of 
$G_{E_0}$
is contained in a union of subspaces of $\overline G^r_d(X',\mathscr E_y,P)$ of the form $$\overline G^r_d(X',\mathscr E_y,P; D'_\bullet, a'_\bullet)=\bigcup_{J\subsetneq S}\bigcup_{\underline d_J} G^r_{\underline d_J}(X'_J; D'_\bullet, a'_\bullet),$$
where $D'_\bullet$ and $a'_\bullet$ are as above and $\underline d_J$ runs over all $P$-quasistable multi-degrees with respect to $\mathscr E_y$. Note that $$a^i_j\leq (a^i_j)'\leq a^i_j+1 \mathrm{\ for\ each\ } i\mathrm{\ and\ }j.$$
It follows that $y-1\leq (a^1_r)'-(a^2_r)'\leq y+1$.

For each $\underline d_J=(d^1_J,d^2_J)$, let $d_J=d^1_J+d^2_J$. If $(a^1_r)'> d^1_J+1$, then $d^2_J=d_J-d^1_J\leq d_J-\frac{d-\tilde g+y}{2}=d_J-(d-\tilde g)+1+\frac{d-\tilde g+y}{2}-(y+1)\leq g_J+1+d^1_J-(y+1)< g_J+1+(a^1_r)'-(y+1)\leq g_J+1+(a^2_r)'.$ Hence there is no nonzero section $s$ of a line bundle on $X'_J$ with multi-degree $\underline d_J$ such that $s$ has multi-vanishing order at least $a'_r$ along $D'_\bullet$. Thus $G^r_{\underline d_J}(X'_J; D'_\bullet, a'_\bullet)$ is empty. Similarly, if $(a^2_r)'>d^2_J+1$, then $d^1_J=d_J-d^2_J\leq d_J-\frac{d-\tilde g-y-1}{2}<d_J-(d-\tilde g)+1+\frac{d-\tilde g-y-1}{2}+2+(y-1)\leq g_J+1+d^2_J+2+(y-1)\leq g_J+1+(a^2_r)'+(y-1)\leq g_J+1+(a^1_r)'.$ Hence $G^r_{\underline d_J}(X'_J; D'_\bullet, a'_\bullet)$ is empty.

We now suppose $(a^i_r)'\leq d^i_J+1$ for each $i$, in other words $a'_\bullet$ is $\underline d_J$ bounded. 
By induction each $G^r_{\underline d_J}(X'_J; D'_\bullet, a'_\bullet)$ has dimension at most $\rho_J$, which is defined in part 1). Hence the space $\overline G^r_d(X',\mathscr E_y,P; D'_\bullet, a'_\bullet)$ has dimension at most $\rho$, and so does
$G_{E_0}$.

3) Let $d=g+r$. 
Recall that we assumed $a^i_r\leq d_i+1$ for each $i$ in the beginning of the proof. We extend $D_\bullet$ to an infinite sequence $\widehat D_\bullet$ such that $\deg \widehat D_j=j$ for all $j\geq 0$ and $\widehat D_{d+2}=(d_1+1)P_1+(d_2+1)P_2$. Then 
$$G^r_{\underline d}( X;\widehat D_\bullet,a_\bullet)=G^r_{\underline d}( X; D_\bullet,a_\bullet)$$
by Remark \ref{rem:multi-vanishing} (1).

3.1) We first show that $G^r_{\underline d}( X;\widehat D_\bullet,a_\bullet)^\circ$ has dimension at most $\rho$. If $a_r=r$ then $\rho=g$ and the conclusion is obvious. We may assume $a_r>r$. Let $\lambda$ be the smallest number such that $a_\lambda>\lambda$. 
Consider an injective map 
$$G^r_{\underline d}( X;\widehat D_\bullet,a_\bullet)^{\circ\circ}\rightarrow G^{r-\lambda}_{\underline d'}(  X;D''_\bullet,a''_\bullet)^{\circ\circ}$$
where $(L,V)$ is mapped to $(L(- D_\lambda),V(- D_\lambda))$. Here $d'_i=d_i-a^i_\lambda$ and $a''_j=a_{j+\lambda}-a_\lambda$ and $D''_j=D_{j+\lambda}-D_\lambda$ for $1\leq i\leq 2$ and $0\leq j\leq r-\lambda$, and $\underline d'=(d'_1,d'_2)$.  
Note that $ G^{r-\lambda}_{\underline d'}(  X;D''_\bullet,a''_\bullet)$ is expected to have  dimension $\rho$.
Note also that $d':=d'_1+d'_2< g+r-\lambda$, and $$d'_1=d_1-a^1_\lambda \leq  d_1-\lambda-1+a^2_\lambda\leq d_1-\lambda+d_2=g+r-\lambda,$$ and similarly $d'_2\leq g+r-\lambda$. By Lemma \ref{lem: d<g+r}, either $0\leq d'_i\leq g$ for $1\leq i\leq 2$ or $ G^{r-\lambda}_{\underline d'}(  X;D''_\bullet,a''_\bullet)$ is empty. Hence by part 1) and 2) $G^{r-\lambda}_{\underline d'}(  X;D''_\bullet,a''_\bullet)^{\circ\circ}$ is either empty or of dimension $\rho$, and so is $G^r_{\underline d}( X;\widehat D_\bullet,a_\bullet)^{\circ\circ}$. 

Now let $\tilde a_\bullet \geq a_\bullet$ be another admissible sequence along $\widehat D_\bullet$. Note that $\tilde a_j=\deg \widehat D_{\tilde a_j}$ for all $j$. Let $\widehat D_{\tilde a_r}=\tilde a^1_rP_1+\tilde a^2_rP_2.$ By construction of $\widehat D_\bullet$, if $\tilde a^1_r>d_1+1$ then $\tilde a^2_r\geq d_2+1$. Hence $G^r_{\underline d}( X;\widehat D_\bullet,\tilde a_\bullet)^{\circ\circ}$ is empty. We may assume $\tilde a^i_r\leq d_i+1$ for each $i$. Then same argument as above shows that $G^r_{\underline d}( X;\widehat D_\bullet,\tilde a_\bullet)^{\circ\circ}$ is either empty or of expected dimension. Moreover, there are only finitely many $\tilde a_\bullet\geq a_\bullet$ such that $G^r_{\underline d}( X;\widehat D_\bullet,\tilde a_\bullet)^{\circ\circ}$ is nonempty. Now it follows from  
$$G^r_{\underline d}( X;\widehat D_\bullet,a_\bullet)^\circ=\bigcup_{\tilde a_\bullet\geq a_\bullet}G^r_{\underline d}( X;\widehat D_\bullet,\tilde a_\bullet)^{\circ\circ}$$
that $G^r_{\underline d}( X;\widehat D_\bullet,a_\bullet)^\circ$ is either empty or of expected dimension.

3.2) We show that $\partial G^r_{\underline d}( X;\widehat D_\bullet,a_\bullet)$ has dimension less than $\rho$. There is a map 
$$\partial G^r_{\underline d}( X;\widehat D_\bullet,a_\bullet)\rightarrow \bigcup_{\tilde r\geq r+1}\bigcup_{\hat a_\bullet}G^{\tilde r}_{\underline d}( X;\widehat D_\bullet,\hat a_\bullet)^{\circ\circ}$$ defined by $(L,V)\mapsto (L,\Gamma( X,L))$. 
Here the second union is over all $\hat a_\bullet$ such that there is a sequence $0\leq l_0<l_1<\cdots <l_r\leq \tilde r$ such that $\hat a_{l_j}\geq a_j$ for all $0\leq j\leq r$. Since $d<g+\tilde r$ for all $\tilde r\geq r+1$, every $G_{\tilde r,\hat a_\bullet}:=G^{\tilde r}_{\underline d}( X;\widehat D_\bullet,\hat a_\bullet)^{\circ\circ}$ is either empty or has expected dimension by part 1) and part 2), and the union above is a finite union. On the other hand, consider a tuple $(\tilde r,\hat a_\bullet)$ and take $\{l_j\}_{0\leq j\leq r}$ as above  which minimizes the sum $\sum_{0\leq j\leq r}l_j$. Denote by $\lambda_j=\hat a_{l_j}$ for $0\leq j\leq r$. Then for all $(\widetilde {L},\widetilde V)\in G_{\tilde r,\hat a_\bullet}$, the preimage of $(\widetilde {L},\widetilde V)$ is a Schubert variety in the Grassmannian Gr$(r+1,\widetilde V)$ consisting of spaces $V\subset \widetilde V$ such that 
$$\dim(V\cap \widetilde V(-\widehat D_{\lambda_j}))\geq r+1-j \mathrm{\ for\ } 0\leq j\leq r, $$
which has dimension  
$$(\tilde r-r)(r+1)-\sum_{j=0}^{ r}(l_j-j).$$
It follows that the preimage of $G_{\tilde r,\hat a_\bullet}$ has dimension 

\begin{equation*}
\begin{split}
& g-(\tilde r+1)(g-d+\tilde r)-\sum_{j=0}^{\tilde r}(\hat a_j-j)+(\tilde r-r)(r+1)-\sum_{j=0}^{ r}(l_j-j)\\
\leq &g-(\tilde r+1)(g-d+\tilde r)-\sum_{j=0}^{ r}(\hat a_{l_j}-l_j)+(\tilde r-r)(r+1)-\sum_{j=0}^{ r}(l_j-j)\\
=&\rho-(\tilde r-r)(g-d+\tilde r)-\sum_{j=0}^r(\hat a_{l_j}-a_j)\\
\leq &\rho-(\tilde r-r)(g-d+\tilde r)
\\
<&\rho.
\end{split}
\end{equation*}
Hence $\partial G^r_{\underline d}( X;\widehat D_\bullet,a_\bullet)$ has dimension less than $\rho$.

It follows that $G^r_{\underline d}( X;\widehat D_\bullet,a_\bullet)$ is either empty or of dimension $\rho$, and so is $G^r_{\underline d}( X; D_\bullet,a_\bullet)$.

4) Assume $d>g+r$. In this case $G^r_{\underline d}( X;\widehat D_\bullet,a_\bullet)^\circ$ and $G^{\tilde r}_{\underline d}( X;\widehat D_\bullet,\hat a_\bullet)^{\circ\circ}$ are empty for all $r<\tilde r<d-g$ by Riemann-Roch theorem. Here $\widehat D_\bullet$ and $\hat a_\bullet$ are the same as in part 3.2). Then by 
the same argument in part 3.2) and the conclusion of part 3) $\partial G^r_{\underline d}( X;\widehat  D_\bullet,a_\bullet)$ has dimension at most 
$$\max_{\tilde r\geq d-g}\{\rho-(\tilde r-r)(g-d+\tilde r)\}=\rho.$$

Hence $G^r_{\underline d}( X;\widehat D_\bullet,a_\bullet)$ is either empty or
of dimension at most $\rho$, and so is $G^r_{\underline d}( X;D_\bullet,a_\bullet).$
\end{proof}

The following example shows that our restrictions on the multi-degree and the ramification condition are necessary:

\begin{ex}\label{ex: not bn general} Let $X$ be an arbitrary binary curve.

(1) Suppose $d_1\geq g$ and $d_2\leq -2$, and $d\geq g+r-1$. As discussed in Lemma \ref{lem: d<g+r}, each line bundle of multi-degree $\underline d=(d_1,d_2)$ has $(d_1-g)$-dimensional space of global sections. Hence $$\dim G^r_{\underline d}(X)=g+(r+1)(d_1-g-r-1)>\rho(g,r,d)$$
as $d_1-1>d$. 

(2) Suppose $d_1\geq g+1$ and $d\geq g+r$. Let $D_j=a^1_jP_1+a^2_j P_2$ and $a_j=\deg D_j$ for $0\leq j\leq r$, where $a^1_j=0$ for $0\leq j\leq r$ and $a^2_j=j$ for $0\leq j\leq r-1$. Suppose $a^2_r>\rho_{g,r,d}+r$. Then $a_\bullet$ is not $\underline d$-bounded, and the expected dimension of $G^r_{\underline d}(X;D_\bullet,a_\bullet)$ is negative. However, each line bundle $L$ of multi-degree $\underline d$ contains a non-zero global section $s$ vanishing on $Z_2$. Hence any $(r+1)$-dimensional subspace of $H^0(X,L)$ containing $s$, which exists by Riemann-Roch theorem, has multi-vanishing sequence at least $a_\bullet$ along $D_\bullet$.   
\end{ex}

We end this paper with explaining the obstruction of applying the idea in \cite{griffiths1980variety} directly to our case and the connection between this idea and our proof of Theorem \ref{thm: main theorem}.

\begin{rem}\label{rem:issue}
Suppose $X$ is obtained from glueing $Q^1_j\in Z_1$ to $Q^2_j\in Z_2$ for $0\leq j\leq g$. Let $\mathbb P^{d_1}_k=\{(x_0,...,x_{d_1},0,...,0)\}$ and $\mathbb P^{d_2}_k=\{(0,...,0,x_{d_1+1},...,x_{d+1})\}$ 
be two disjoint planes in $\mathbb P^{d+1}_k$. As pointed out by Eric Larson, after identifying $Z_i$ with a rational normal curve in $\mathbb P^{d_i}_k$, as in the right part of Figure \ref{fig: remark}, the space $G^r_{\underline d}(X)$ corresponds to the space of $(d-r)$-planes passing though the $g+1$ lines $\overline {Q^1_jQ^2_j}$ modulo the $k^*$ action 
$$y\in k^*\colon (x_0,...,x_{d+1})\mapsto (x_0\cdot y,...,x_{d_1}\cdot y,x_{d_1+1},...,x_{d+1}).$$
Here $\overline {Q^1_jQ^2_j}$ meets $Z_i$ at $Q^i_j$.
Moreover, for a $(d-r)$-plane $\Lambda$ that intersects $\overline{Q^1_jQ^2_j}\backslash \{Q^1_j,Q^2_j\}$ for each $0\leq j\leq g$, if $\Lambda\cap \overline{Q^1_{j_0}Q^2_{j_0}}$ only contains one point, then this point determines the glueing data of a line bundle $L$ of multi-degree $\underline d$ on $X$ at the corresponding node, and each hyperplane in $\mathbb P^{d+1}_k$ that contains $\Lambda$ induces a global section of $L$ up to scaling.

\begin{figure}[H]
\begin{tikzpicture}[scale=0.13]

 \draw[shift={(-70,-3)}][-] (47,-25) -- (55,-48);
    \draw[shift={(-70,-3)}][-] (43,-24) -- (42,-48);
    \draw[shift={(-70,-3)}][-] (40,-20) -- (27,-47);

    \draw[shift={(-70,-3)}][thick] plot [smooth,tension=1] coordinates{(40,-10)(38.5,-24.5)(56,-35)(45,-43)(23,-42)};
    
     \draw [shift={(-70,-3)}](38.2,-23.8) node[circle, fill=black, scale=0.3, label=left:{$N^1_g$}]{};
     
     \draw [shift={(-70,-3)}](42.8,-28.3) node[circle, fill=black, scale=0.3, label=left:{}]{};
     \draw [shift={(-70,-3)}](42.8,-30) node[circle, fill=black, scale=0, label=left:{$N^1_2$}]{};
     
     \draw [shift={(-70,-3)}](48.8,-30.5) node[circle, fill=black, scale=0.3, label=above:{}]{};
      \draw [shift={(-70,-3)}](50,-32.5) node[circle, fill=black, scale=0, label=left:{$N^1_1$}]{};
     
     \draw [shift={(-70,-3)}](52.5,-41) node[circle, fill=black, scale=0.3, label=left:{}]{};
     \draw [shift={(-70,-3)}](54,-44) node[circle, fill=black, scale=0, label=left:{$N^2_1$}]{};
     
     \draw [shift={(-70,-3)}](42.2,-43.5) node[circle, fill=black, scale=0.3, label=above:{}]{};
     \draw [shift={(-70,-3)}](42.2,-46) node[circle, fill=black, scale=0, label=left:{$N^2_2$}]{};
     
     \draw [shift={(-70,-3)}](29,-43) node[circle, fill=black, scale=0.3, label=above:{}]{};
      \draw [shift={(-70,-3)}](27,-43) node[circle, fill=black, scale=0, label=above:{$N^2_g$}]{};
     
     \draw [shift={(-70,-3)}](40,-10) node[circle, fill=black, scale=0, label=above:{$\widetilde C'$}]{};

 \draw [shift={(-70,-3)}](37,-40) node[circle, fill=black, scale=0, label=above:{$\cdots$}]{};

    \draw[-] (10.8,-13.8) -- (10.8,-36.1);
    \draw[-] (10.8,-36.1) -- (23.2,-31.1);
        \draw[-] (23.2,-31.1) -- (23.2,-8.9);
    \draw[-] (10.8,-13.8) -- (23.2,-8.9);
    \draw[-] (24.3,-35.1) -- (19.3,-52.8);
    \draw[-] (19.3,-52.8) -- (58.2,-46.3);
    \draw[-] (62.4,-27.6) -- (58.2,-46.3);
    \draw[-] (62.4,-27.6) -- (24.3,-35.1);
    \draw[-] (16.4,-25.7) -- (36.3,-47.9);
        \draw[-] (16,-24.1) -- (42.1,-41.3);
    \draw[-] (14.6,-20.6) -- (54.1,-36.5);

    \draw[thick] plot [smooth,tension=1] coordinates{(19.4,-15.7)(17.3,-22.8)(19.3,-27.9)(14,-32.6)};
    \draw[thick] plot [smooth,tension=1] coordinates{(58,-35.2)(45,-34)(30,-48)(25,-40)};

      \draw (17.5,-21.7) node[circle, fill=black, scale=0.3, label=above:{}]{};
      \draw (15.5,-21.5) node[circle, fill=black, scale=0, label=above:{$Q^1_g$}]{};

    \draw (19,-26) node[circle, fill=black, scale=0.3, label=above:{}]{};
     \draw (18.5,-25.5) node[circle, fill=black, scale=0, label=right:{$Q^1_1$}]{};

    \draw (19,-28.7) node[circle, fill=black, scale=0.3, label=above:{}]{};
    \draw (18.5,-28.7) node[circle, fill=black, scale=0, label=left:{$Q^1_0$}]{};

     \draw (33.9,-45.2) node[circle, fill=black, scale=0.3, label=above:{}]{};
     \draw (33.9,-45.2) node[circle, fill=black, scale=0, label=right:{$Q^2_0$}]{};

      \draw (38.7,-39) node[circle, fill=black, scale=0.3, label=above:{}]{};
      \draw (38,-39) node[circle, fill=black, scale=0, label=above:{$Q^2_1$}]{};

       \draw (46.5,-33.4) node[circle, fill=black, scale=0.3, label=above:{}]{};
       
       \draw (46.5,-33.4) node[circle, fill=black, scale=0, label=below:{$Q^2_g$}]{};
       
        \draw (33,-30) node[circle, fill=black, scale=0, label=below:{$\cdots$}]{};

        \draw (10,-14) node[circle, fill=black, scale=0, label=above:{$\mathbb P^{d_1}_k$}]{};

         \draw (62,-46) node[circle, fill=black, scale=0, label=above:{$\mathbb P^{d_2}_k$}]{};

         \draw (50,-20) node[circle, fill=black, scale=0, label=above:{$\mathbb P^{d+1}_k$}]{};

          \draw (20,-16) node[circle, fill=black, scale=0, label=above:{$Z_1$}]{};
           \draw (58,-35) node[circle, fill=black, scale=0, label=above:{$Z_2$}]{};
\end{tikzpicture}
\caption{}\label{fig: remark}
\end{figure}

However, unlike in \cite{griffiths1980variety} the intersection $\Sigma=\sigma(\overline{Q^1_0Q^2_0})\cap\cdots \cap\sigma(\overline{Q^1_gQ^2_g})$ in $Gr(d-r+1,d+2)$ is not necessarily dimensionally transverse, where $\sigma(\overline{Q^1_jQ^2_j})$ is the space of $(d-r)$-planes in $\mathbb P^{d+1}_k$ that meets $\overline{Q^1_jQ^2_j}$:   if $g$ is big enough, then the expected dimension of the intersection is negative, but it always contains the $(d-r)$-planes that contain either the given $\mathbb P^{d_1}_k$ or $\mathbb P^{d_2}_k$. Even if we ignore these ``extra" $(d-r)$-planes, the degeneration process did not work well as in \cite{griffiths1980variety}. Using the same symbol as in the introduction (or see the left part of Figure \ref{fig: remark}), the main fact in \cite{griffiths1980variety} is that for general points $N^i_{m+1}$, if a $(d-r-1)$-plane $\Lambda'$ in $\mathbb P^{d}_k$ satisfies $\dim (\Lambda'\cap \overline{aN^1_1})\geq \lambda$ for some $a\leq d-1$ and $\Lambda'\cap \overline {N^1_{m+1}N^2_{m+1}}\neq \emptyset$, then  $\dim(\Lambda'\cap \overline {aN^1_1,N^1_{m+1},N^2_{m+1}})\geq \lambda +1$. Here $\overline {aN^1_1,N^1_{m+1},N^2_{m+1}}$ denotes the linear subspace of $\mathbb P^d_k$ spanned by $\overline {aN^1_1}$ and $N^i_{m+1}$, and recall that $\overline {aN^1_1}$ is the $a$-th osculating plane of $\widetilde C'$ at $N^1_1$. 
 In our case, after degenerating $Q^i_1,...,Q^i_m$ to $Q^i_0$ for $i=1,2$, from the first $m+1$ items of the intersection of $\Sigma$ we get Schubert varieties of flags of the form
 $$\overline {b^1_0Q^1_0,b^2_0Q^2_0}\subsetneq\cdots\subsetneq\overline {b^1_{d+1}Q^1_0,b^2_{d+1}Q^2_0}.$$
Note that $b^1_\bullet$ and $b^2_\bullet$ are non-decreasing and non-negative integers bounded above by $d_1+1$ and $d_2+1$ respectively, and again, $\overline {b^1_jQ^1_0,b^2_jQ^2_0}$ denotes the space spanned by $\overline {b^1_jQ^1_0}$ and $\overline {b^2_jQ^2_0}$. It is not true in general that $\dim(\Lambda\cap \overline{b^1_lQ^1_0,b^2_lQ^2_0})\geq \lambda$ for some $l\leq d$ and $\Lambda\cap \overline {Q^1_{m+1}Q^2_{m+1}}\neq \emptyset$ implies  
 \begin{equation}\label{eq:introduction}
    \dim( \Lambda\cap \overline {b^1_lQ^1_0,b^2_lQ^2_0,Q^1_{m+1},Q^2_{m+1}})\geq \lambda +1,
 \end{equation}
 due to the possible existence of planes $\Lambda$ containing $Q^i_{m+1}$ when $b^i_l=d_i+1$ for either $i=1$ or $i=2$. Moreover, the locus of $\Lambda$ not satisfying (1) may have dimension more than expected. 
For example, consider the Schubert cycle $S$ consisting of $(d-r)$-planes that meet $\mathbb P^{d_1}_k$ in dimension at least $x $. The intersection of $S$ with $\sigma(\overline{Q^1_{m+1}Q^2_{m+1}})$ is expected to have codimension $\gamma=r+(x+1)(r+1+x-d_1)$ in $Gr(d-r+1,d+2)$. But the subset of $S$ of planes containing $Q^1_{m+1}$ has codimension $r+1+x(r+1+x-d_1)$, which is less than $\gamma$ if $r+x>d_1$.

It turns out that the space $\Sigma/k^*$, as a potential compactification of $G^r_{\underline d}(X)$, is too big for the degeneration argument, and we need to rule out the ``bad planes": $(d-r)$-planes in $\Sigma$ which contain exactly one of $\{Q^1_j,Q^2_j\}$ for some $j$. Note that intuitively, ``bad planes" correspond to torsion free sheaves on $X$ that are not locally free at one node or more. We thus adopt Esteve's compactification of the Jacobian of $X$, which is obtained by adding certain controllable torsion free sheaves.

Now the Schubert conditions above for a $(d-r)$-plane $\Lambda\subset \mathbb P^{d+1}_k$ induce ramification conditions for the corresponding linear series. More precisely, let $\widetilde X$ be the normalization of $X$ and $(L,V)$ be a linear series on $\widetilde X$ induced by a $\Lambda$. Then $\dim(\Lambda\cap \overline{b^1_lQ^1_0,b^2_lQ^2_0})\geq \lambda$ if and only if $$\dim(V(-b^1_lQ^1_0-b^2_lQ^2_0))\geq r+1-(b^1_l+b^2_l-1)+\lambda,$$ 
and $\Lambda\cap \overline {Q^1_{m+1}Q^2_{m+1}}\neq \emptyset$ if and only if $\dim V(-Q^1_{m+1}-Q^2_{m+1})\geq r$. These two inequalities imply immediately that 
$$\dim V(-b^1_lQ^1_0-b^2_lQ^2_0-Q^1_{m+1}-Q^2_{m+1})\geq r-(b^1_l+b^2_l-1)+\lambda,$$
which is equivalent to $(1)$ if $b^i_l\leq d_i$ for each $i$. Therefore, we are able to modify the proof in \cite{griffiths1980variety} and apply to our case by considering linear series with imposed ramification.
\end{rem}

 \bibliographystyle{amsalpha}\bibliography{1}

\end{document}